\documentclass[12pt,oneside,a4paper]{amsart}
\usepackage{microtype}
\usepackage{amssymb}
\usepackage{enumerate}

\usepackage[T1]{fontenc}
\usepackage{newtxtext,newtxmath}
\mathcode`\;="303B

\usepackage{tikz}
\usetikzlibrary{decorations.pathreplacing}
\usepackage{todonotes}

\renewcommand{\epsilon}{\varepsilon}
\renewcommand{\phi}{\varphi}

\newcommand{\X}{\mathcal{X}}
\newcommand{\Y}{\mathcal{Y}}

\newtheorem{theorem}{Theorem}

\newtheorem{lemma}[theorem]{Lemma}
\newtheorem{definition}[theorem]{Definition}
\newtheorem{remark}[theorem]{Remark}

\makeatletter
\renewcommand\subsection{\@startsection{subsection}{2}%
  \z@{.5\linespacing\@plus.7\linespacing}{.1\linespacing}%
  {\normalfont\scshape}}
\makeatother

\title{$C^{1,1}$ regularity for principal-agent problems}
\author{Robert J. McCann
  \and
  Cale Rankin
  \and
Kelvin Shuangjian Zhang}
\address{Department of Mathematics, University of Toronto, Toronto Ontario M5S 2E4 Canada {\tt mccann@math.toronto.edu}
and {\tt cale.rankin@utoronto.ca}}
\address{Fields Institute for Research in the Mathematical Sciences, 222 College Street, Toronto Ontario  M5T 3J1 Canada {\tt crankin@fields.utoronto.ca}}
\address{ School of Mathematical Sciences,
Fudan University, Shanghai,
CHINA 200433
{\tt ksjzhang@fudan.edu.cn}}

\thanks{$^*$ Robert McCann's research is supported in part by the Canada Research Chairs program CRC-2020-00289 and Natural Sciences and Engineering Research Council of Canada Discovery Grants RGPIN--2020--04162. Cale Rankin is supported by the Fields Institute for Research in Mathematical Sciences. The authors are grateful to Luis Caffarelli for making his unpublished notes with Lions available to us,  and to Toronto's Fields Institute for the Mathematical Sciences, where much of this work was performed.    \copyright \today \\
MSC Classification Primary 49N60, 91B43; Secondary 35R35, 52A41, 58E50, 90B50.
}

\begin{document}

\begin{abstract}
  We prove the interior  $C^{1,1}$ regularity of  the indirect utilities which solve a subclass of principal-agent problems originally considered by Figalli, Kim,  and McCann. Our approach is based on construction of a suitable comparison function which, essentially, allows one to pinch the solution between parabolas.  The original ideas for this proof arise from an earlier, unpublished, result of Caffarelli and Lions for bilinear preferences which we extend here to general quasilinear benefit functions. We give a simple example which shows the  $C^{1,1}$ regularity is optimal.
\end{abstract}
\maketitle

Principal-agent problems are a class of economic models with applications to tax policy, regulation of public utilities, product line design and contract theory; for references see \cite{MR2888826,kstz2022,MR3957395}.
Mathematically these are optimization problems in the calculus of variations. A subclass of these problems which is economically relevant and mathematically tractable was studied by Figalli, Kim, and McCann \cite{MR2888826}. In this article we show the indirect utilities which solve the problems they considered are locally $C^{1,1}$; equivalently, the maps from agent types to products selected are Lipschitz continuous locally. Even in simple cases, where the minimizer may be computed explicitly, the minimizer is not $C^2$. Thus the $C^{1,1}$ regularity is optimal. In this introduction we outline the formulation of, and a bit of history on, the regularity of these problems. Then, in Section \ref{sec:sett-stat-results}, we provide precise details and a full formulation of the setting and our results. 

We are interested in minimizing a functional
\begin{equation}
  \label{eq:functional}
  L[u]:= \int_{\X}F(x,u,Du) \ dx,
\end{equation}
over the set of $b$-convex functions. Here, $b$-convexity is a generalized notion of convexity which arises in the contexts of incentive compatibility \cite{Rochet87} and independently, optimal transport \cite{Villani03}; in the bilinear case $b(x,y) = x \cdot y$ of Rochet and Chon\'e \cite{rochet-chone} it reduces to plain convexity (plus the option of a gradient constraint).  The quantity of economic interest, the principal's price menu, is a smooth function of the minimizer $u$ and its gradient $Du$ (further details in Section \ref{sec:backgr-mater-conv}). Thus regularity results for $u$ answer questions of economic interest such as ``do similar consumers  necessarily consume similar products?''.  
(By similar we mean suitably close in the spaces parameterizing consumer and product types respectively.)
 
The regularity of minimizers to \eqref{eq:functional} is complicated by the set of functions over which the applications  compel us to minimize. Indeed, even in the simplest case when the set of functions is the cone of convex functions we are prevented  from employing the usual techniques in the calculus of variations: Two sided interior perturbations of convex functions are, in general, not convex and, subsequently, it is difficult to derive the Euler--Lagrange equations; however, see Kolesnikov--Sandomirsky--Tsyvinski--Zimin \cite{kstz2022}, McCann--Zhang \cite{McCannZhangArxiv}, Rochet--Chon\'e \cite{rochet-chone} and their references for recent progress.

The framework in which Figalli, Kim, and McCann worked has links to optimal transport: the mapping between consumers and the product they purchase is the optimal transport map between the density of consumers and the density of products sold. Thus one might hope to bring the regularity theory from optimal transport (namely \cite{MR2188047,MR3067826,MR2512204,DePhilippisFigalli15}) to bear on the principal-agent problem. However, apriori we are studying regularity of the optimal transport map in a situation where the target data (density of products sold) is unknown. Thus, once again, the usual approach, via the Monge--Amp\`ere equation, does not apply in our setting. Numerical examples e.g. \cite{MR3474490}  \cite{NBERw30015} and theory \cite{armstrong1996} \cite{rochet-chone} \cite{McCannZhangArxiv} show the associated data for the Monge--Amp\`ere type equation is singular satisfying neither a lower bound with respect to the Lebesgue measure, nor absolute continuity with respect to the Lebesgue measure. 

Because the standard approaches to regularity are unavailable we rely on more bare-handed techniques. We use the only information available --- that $u$ is a minimizer to the functional \eqref{eq:functional}. Our proof relies on the construction of a comparison function. This technique, time immemorial in PDEs, allows one to turn the inequality $L[u] \leq L[v]$, which holds for all admissible functions $v$, into an inequality which directly implies the $C^{1,1}$ regularity. 

The literature on these techniques applied to the principal-agent problem is limited. In the special case of a bilinear benefit function the key results are a global $C^{1}$ result of {Rochet--Chon\'e \cite{rochet-chone}
and Carlier--Lachand-}Robert~\cite{MR1811127}, and an interior $C^{1,1}$ result that appears in some unpublished notes by Caffarelli and Lions \cite[circa 2006]{CaffarelliLions}. Our results and method of proof are based on the work of Caffarelli and Lions, but we improve on it also: we complete our analysis of the problem without separating it into the two cases they consider, one of which seems obscure to us. 

For general quasilinear benefit functions, those related to costs in optimal transport, {an interior analog of the $C^1$ result of Rochet--Chon\'e and Carlier--Lachand-Robert was obtained} by Chen~\cite{ChenThesis,MR4531938} (who made use of some of Caffarelli and Lions's techniques). In this paper we deal with the same problem that Chen did but prove the analogue of Caffarelli and Lions's $C^{1,1}$ result. We employ, as well as the techniques of the aforementioned works, some results and methods from the optimal transport literature, namely Figalli, Kim, and McCann \cite{MR2888826,MR3067826} and also Chen and Wang \cite{MR3419751}.

Our main result is Theorem \ref{thm:main} stated in Section \ref{sec:math-form} after introducing precisely the setting we're working in. In Section \ref{sec:simplest-case-an} we provide an exposition of 
Caffarelli and Lions's result for the bilinear setting. We include this section because here the geometric ideas are clearest, yet the result does not appear in the literature.  Following this we recall in Section \ref{sec:backgr-mater-conv} some background material, before proving our key results in Sections \ref{sec:key-lemma} and \ref{sec:proof-lemma}.

\section{Setting and statement of results}
\label{sec:sett-stat-results}

\subsection{Economic model}
\label{sec:economic-model}

Let us now heuristically explain the economic setting. The mathematical details are given in the next subsection. A monopolist is concerned with selling indivisible products (e.g., cars, houses, smart phones) to consumers. We consider a space of consumers $\X \subset \mathbf{R}^n$ and a space of products $\Y \subset \mathbf{R}^n$. We assume consumers are distributed according to some measure $\mu \ge 0$ on $\X$ and that product $y$ costs the monopolist price $c(y)$. The goal of the monopolist is to assign to each product $y \in \Y$ a price $v(y) \in \mathbf{R}$ in such a way that the monopolist's resulting profit is maximized. The function $v:\Y \rightarrow \mathbf{R}$ is called the price menu. Each consumer may purchase at most one product. The benefit a consumer $x$ gains from a product $y$ is given by  $b(x,y)$, and thus the utility obtained by consumer $x$ on purchasing product $y$ is $b(x,y) - v(y)$. A rational consumer will maximize their utility. That is, they will purchase the product $y$  which realizes
\begin{equation}
  \label{eq:u-econ-def}
  u(x) := \sup _{y \in \Y} b(x,y) - v(y).
\end{equation}
Employing the notation
\begin{equation}
  \label{eq:yu-economic-def}
  Yu(x) = \mathop{\rm argmax}\limits_{y \in \Y}b(x,y) - v(y)
\end{equation}
for the product purchased by $x$, the monopolist's profit is
\[L:= \int_{\X}v(Yu(x)) - c(Yu(x)) d \mu(x),\]
and their goal is to maximize this quantity over all choices of price menu $v$. To incentivize the monopolist not to price products too high we assume there is a given ``null'' product (or ``outside option'' ) $y_\emptyset$, which the monopolist is compelled to offer at a fixed price $v(y_\emptyset)$. This results in an additional requirement on the allowable price menus; the resulting utilities must satisfy $u(x) \geq b(x,y_\emptyset) - v(y_\emptyset)$ for all $x$. We now more precisely define our setting.

\subsection{Mathematical formulation}
\label{sec:math-form}

Figalli, Kim, and McCann were able to formulate the above problem using the framework of convexity in optimal transport. This framework strikes a balance between encompassing a broad class of benefit functions $b(x,y)$ whilst allowing the development of a convexity theory.

We take $\X,\Y \subset \mathbf{R}^n$ to be bounded domains. Denote their closures by $\overline{\X}$ and $\overline{\Y}$. We require $b$ to be smooth on $\overline{\X}\times \overline{\Y}$. For notation  we denote by $b_x(x_0,y)$ the differential of the mapping $x \in \X \mapsto b(x,y)$ evaluated at $x = x_0$, similarly for $b_y(x,y_0)$. We assume the following conditions on $b$, which we take from \cite{MR4531938}.  \\
\textbf{B1. } For each $(x_0,y_0) \in \overline{\X} \times \overline{\Y}$ the mappings
\begin{align*}
  &y \in \overline{\Y} \mapsto b_x(x_0,y)\\
  \text{ and }&x \in \overline{\X} \mapsto b_y(x,y_0)
\end{align*}
are diffeomorphisms onto their ranges.\\
\textbf{B2. }  For each $(x_0,y_0) \in \overline{\X} \times \overline{\Y}$ the sets $b_x(x_0,\Y)$ and $b_y(\X,y_0)$ are convex.

For our next condition let
\[Y:\{(x,b_{x}(x,y)) ; (x,y) \in  \overline \X \times \overline\Y\} \rightarrow \overline{\Y} \]
be defined by
\[ b_x(x,Y(x,p)) = p,\]
 and note $Y$ is well defined by B1. \\
\textbf{B3.} For $\xi,\eta \in \mathbf{R}^n$ there holds
\[ D_{p_kp_l}b_{x^ix^j}(x,Y(x,p))\xi^i\xi^j\eta_k\eta_l \geq0.\]
Here we follow the notation of \cite{MR2888826},  noting that B3 is a variant on the A3 and A3w conditions originally introduced for the regularity theory of optimal maps by Ma, Trudinger, Wang [10] and Trudinger, Wang [15]. The difference is A3 and A3w hold only in the case $\xi \cdot \eta = 0$. The  B3 condition is both more common in the economics literature and {more appropriate to} that setting: it's a necessary condition for the optimization problem to be convex \cite{MR2888826}. 

For such $b$ an associated convexity theory has been developed \cite{MR3067826,MR2188047}.

\begin{definition}
  A function $u:\X \rightarrow \mathbf{R}$ is called $b$-convex provided there is  $v:\Y \rightarrow \mathbf{R}$ such that 
  \begin{equation}
    \label{eq:b-conv-def}
    u(x) = \sup_{y \in \Y} b(x,y) - v(y).
\end{equation}
\end{definition}
We've already seen that the $b$-convexity of the consumers utility is a natural consequence of their rationality, as in \eqref{eq:u-econ-def}.
With this definition (note $u,v$ are finite) a $b$-convex function has a {\em support} at each point in its domain. That is, for each $x_0 \in \X$ there is $y_0 \in \overline{\Y}$ such that
\begin{equation}
  \label{eq:support-def}
  u(x) \geq u(x_0) + b(x,y_0) - b(x_0,y_0), \quad \text{ for all }x\in \X.
\end{equation}
By analogy with the bilinear case $b(x,y)=x\cdot y$, we call such lower bounds {\em $b$-affine} functions, which in this case forms a {\em $b$-support} for $u$ at $x_0$. For a given $x_0$ the set of all $y_0$ such that \eqref{eq:support-def} holds is denoted $Yu(x_0)$.  It's a singleton for almost every $x$. Defined in this way we see that $Yu(x)$ attains the supremum in \eqref{eq:b-conv-def} and therefore agrees with the definition \eqref{eq:yu-economic-def}. A $b$-convex function is called uniformly $b$-convex provided there is $\epsilon > 0$ such that $D^2u(x) - b_{xx}(x,Yu(x)) \geq \epsilon I$. 
If $b \not\in C^2$ we interpret these as directional derivative bounds in the distributional sense
(for which the ambiguity in $Yu$ on a set of measure zero is irrelevant).  The dual notion, $b^*$-convexity,  is obtained by switching the roles of $x$ and $y$. For example we define $b^*$-convexity as follows.
\begin{definition}
  A function $v:\Y \rightarrow \mathbf{R}$ is called $b^*$-convex provided there is  $u:\X \rightarrow \mathbf{R}$ such that 
  \begin{equation}
    \label{eq:bst-conv-def}
   v(y) = \sup_{x \in \X} b(x,y) - u(x).
\end{equation}
\end{definition}

By the equality $u(x) = b(x,Yu(x)) - v(Yu(x))$ we see that the principal-agent problem may be reformulated as follows. 

\textbf{Principal-agent problem} Minimize the functional
\begin{equation}
  L[u] := \int_{\X} [c(Yu(x)) - b(x,Yu(x)) + u(x)] \ d\mu(x) \label{eq:to-minimize}
\end{equation}
over the set
\[U_0 := \{u : \X \rightarrow \mathbf{R}; u \text{ is $b$-convex}, u(x) \geq a_\emptyset+b(x,y_\emptyset)\}\]
for a given $a_\emptyset,y_\emptyset$ which arise from the assumption of a ``null'' product.

If $u$ is the minimizer then the principal's optimal price menu (or its $b^*$-convex hull) is recovered as $v(y) = \sup_
{x \in \X} b(x,y) - u(x)$.

In this setting we can now state our main theorem.   Let $C^{0,1}(\X)$ denote the set of bounded Lipschitz real-valued functions on $\X$,
and $C^{1,1}_{loc}(X)$ the set of continuously differentiable functions whose values and derivatives are Lipschitz on each compact subset of $\X$
(i.e. on each $\X' \subset\subset X$).

\begin{theorem}\label{thm:main}
  Assume $b$ satisfies B1,B2,B3. Assume that $c$ is uniformly $b^*$-convex and $\mu = f \ dx$ where $f \in C^{0,1}(\X)$ satisfies $0 < \lambda \leq f(x)$. Then the solution of the principal-agent problem, $u$, satisfies $u \in C^{1,1}_{\text{loc}}(\X)$ .
\end{theorem}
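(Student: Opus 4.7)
The plan is to establish a two-sided paraboloid estimate for $u$ at each $x_0 \in \X' \subset\subset \X$: the lower bound (semiconvexity) is free from the $b$-convexity of $u$, while the upper bound (semiconcavity) is extracted from the minimality of $u$ by a carefully designed comparison argument. For semiconvexity, the support inequality \eqref{eq:support-def} with any $y_0 \in Yu(x_0)$ together with smoothness of $b$ on $\overline{\X} \times \overline{\Y}$ immediately yields
\[ u(x) \geq u(x_0) + b_x(x_0, y_0) \cdot (x - x_0) - C|x - x_0|^2, \]
with $C$ depending only on $\|b\|_{C^2}$. This settles one side.

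For the matching upper paraboloid, I would argue by contradiction: suppose $u$ fails to be semiconcave with some large constant $C$ at $x_0 \in \X'$, meaning there exist symmetric nearby points $x_0 \pm h$ for which $\eta := u(x_0) - \tfrac12(u(x_0+h) + u(x_0-h))$ is large compared to $|h|^2$. In the spirit of Caffarelli--Lions (Section~\ref{sec:simplest-case-an}), I would form a $b$-convex function from the $b$-supports of $u$ at $x_0 \pm h$ --- say $w := \max(\ell_+, \ell_-)$ with $\ell_\pm$ the $b$-affine supports at $x_0 \pm h$ --- and take the competitor to be a suitable truncated modification agreeing with $u$ outside a section $S \ni x_0$ of scale $|h|$, and lying below $u$ on $S$ with drop $\gtrsim \eta$ near $x_0$. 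Because $w$ is a maximum of $b$-affine functions both dominated by $u$, the resulting $v$ is $b$-convex by construction and inherits $v \geq a_\emptyset + b(\cdot, y_\emptyset)$ from $u$, so $v \in U_0$; the delicate point is the gluing across $\partial S$, which must use B2 together with the smoothness of $b$.

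Feeding $v$ into $L[u] \leq L[v]$ and decomposing via \eqref{eq:to-minimize},
\[ 0 \leq L[v] - L[u] = \int_S (v - u)\, f\, dx + \int_S \bigl([c(Yv) - b(x, Yv)] - [c(Yu) - b(x, Yu)]\bigr)\, f\, dx, \]
the first integral is bounded above by $-\lambda \int_S (u - v)\, dx \lesssim -\lambda\, \eta\, |S|$ using $f \geq \lambda$, while the second is of higher order in $|h|$: uniform $b^*$-convexity of $c$ makes $y \mapsto c(y) - b(x, y)$ strictly convex uniformly in $x$, so the cost change is controlled pointwise by $|Yv - Yu|^2$, and B1--B3 translate this into a quantity $\lesssim |h|^2 |S|$ via control of the $b$-gradient jump across $\partial S$. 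Balancing the two gives $\eta = O(|h|^2)$, which is exactly the semiconcavity sought, with constant depending on $\lambda$, $\operatorname{dist}(\X', \partial\X)$, $\|f\|_{C^{0,1}}$, the smoothness of $b$, and the modulus of $b^*$-convexity of $c$.

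The main obstacle is the geometric realization of the section $S$ and the gluing of $v$ out of $b$-affine pieces so that (i) $b$-convexity is preserved, (ii) the volume $|S|$ and the drop $u - v$ are genuinely comparable to $|h|^n$ and $\eta$ respectively, and (iii) the perturbation of $Y$ is of order $|h|$ rather than order $1$. This is precisely where condition B3 enters essentially: without a Ma--Trudinger--Wang-type bound the sections of $b$-convex functions can degenerate into thin slabs on which the balance above fails. I expect the section-geometry machinery of Figalli--Kim--McCann~\cite{MR3067826} and Chen--Wang~\cite{MR3419751} to do the heavy lifting here, supplying the John-type control of $S$ that the comparison argument requires. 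The restriction $\X' \subset\subset \X$ is then forced because the admissible scale of $|h|$ must shrink with $\operatorname{dist}(x_0, \partial\X)$ to keep $S \subset \X$ and to keep $v$ within $U_0$, which is also the structural reason why the conclusion is only local.
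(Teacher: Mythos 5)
Your plan has the right overall shape (comparison function, balance a first-order gain against a quadratic loss using the uniform $b^*$-convexity of $c$, localize via section geometry), but the central construction goes in the wrong direction and this is fatal, not cosmetic. You propose a competitor $v \leq u$ built from $b$-supports of $u$, with $v<u$ on a section $S$. The admissible class is $U_0 = \{u \text{ $b$-convex},\ u \geq a_\emptyset + b(\cdot,y_\emptyset)\}$, and the participation constraint is \emph{not} inherited by a function lying below $u$: your sentence ``inherits $v \geq a_\emptyset + b(\cdot,y_\emptyset)$ from $u$'' is simply false, since $v\le u$ gives the inequality in the unhelpful direction. Worse, the place where $C^{1,1}$ regularity is actually at stake is the free boundary of the non-participation region, where $u$ \emph{equals} $a_\emptyset + b(\cdot,y_\emptyset)$ (see the explicit one-dimensional example in the remark after Theorem~\ref{thm:main}); there is no room at all to push $u$ down near such points, so your comparison argument has nothing to compare against exactly where it is needed. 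The paper avoids this entirely by going \emph{up}: the trial function is $u_h = \max\{u, p_y\}$ with $p_y$ $b$-affine, which is automatically $b$-convex and automatically $\geq u \geq a_\emptyset + b(\cdot,y_\emptyset)$, so admissibility is free. The quantity bounded is then $h = \sup_{B_r(x_0)}(u - p_0)$ for the support $p_0$ at $x_0$, directly (no contradiction argument, and no symmetric-difference $\eta$ with its attendant sign confusion --- note your $\eta$ as defined is nonpositive by convexity of the transformed $u$, so ``$\eta$ large'' cannot happen).

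There is a second gap even if one ignores the constraint: the $b$-convex gluing you defer as ``the delicate point'' is the whole problem for a lower competitor. A maximum of supports $w=\max(\ell_+,\ell_-)$ is $b$-convex but satisfies $w\le u$ \emph{everywhere}, so $\{w<u\}$ is essentially all of $\X$, not a small section of scale $|h|$; truncating it to agree with $u$ outside a small $S$ while preserving $b$-convexity across $\partial S$ is not a local modification and you give no mechanism for it. By contrast, $\max\{u,p_y\}$ differs from $u$ exactly on the section $S=\{u<p_y\}$, which Lemma~\ref{lem:main} shows (via the Loeper maximum principle, the expansion of Lemma~\ref{lem:transformations}, and the slab-containment estimate \eqref{eq:section-containment}) has the right size and geometry to run the Jensen/energy estimate \eqref{eq:energy-comparison}. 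Finally, your treatment of the cost term is hand-waving: uniform $b^*$-convexity of $c$ gives a \emph{lower} bound on $g(p_1)-g(p_0)-Dg(p_0)\cdot(p_1-p_0)$, and the paper extracts the good quadratic term $-C_2h^2/r^2$ precisely by combining this lower bound with integration by parts (using $u_h-u=0$ on $\partial S\cap\X$) and the slab containment to control $\int_S|Du-Dp_y|^2$ from below; simply asserting the cost change is $\lesssim |h|^2|S|$ skips the entire argument. To repair your proposal you would need to (a) switch to an upper competitor and (b) actually prove the section-geometry and energy estimates of Lemma~\ref{lem:main}.
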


\begin{remark}\label{rem:unif-conv}
 Although existence of solutions holds more generally \cite{Carlier01},  the condition that $c$ is $b^*$-convex is standard for the uniqueness theory \cite{MR2888826}. 
  Moreover a strengthening of convexity to uniform convexity is standard when one moves from uniqueness to regularity (consider for example degenerate ellipticity vs uniform ellipticity arising from convexity properties of a Lagrangian).  What's essential is that we obtain the conclusion
    \begin{equation}
 c(Y(x,p)) - b(x,Y(x,p))\label{eq:unif-conv}
\end{equation}
is uniformly convex in $p$, independent of $x$ (the dual result to Lemma \ref{lem:transformations} (i)). 
\end{remark}
 \begin{remark}
  It's straightforward to see Theorem \ref{thm:main} is optimal. Indeed, even in the simplest case: $\Omega = [0,1]\subset \mathbf{R}$, $b(x,y) = x y$, $c(y) = |y|^2/2$, and $f(x) = 1$, the minimizer is $C^{1,1}$ but not $C^2$. Indeed, we must minimize
  \[ L[u] = \int_{0}^1 {[ \frac{1}{2}u'(x)^2 + u(x) - xu'(x)]} \, dx.\]
  The Euler--Lagrange equation in the region of strict convexity is $u''(t) = 2$. Thus using that $u \in C^{1,1}_{\text{loc}}$ along with the Neumann condition $u'(1) = 1$ we may compute that the minimizer is the following function with discontinuous second derivative:
  \begin{align*}
    u(x) =
    \begin{cases}
      x^2 - x + 1/4 &\text{for }1/2 \leq x,\\
      0 &\text{for }x < 1/2.
    \end{cases}
  \end{align*}
A rigorous justification that the above $u$ is the minimizer follows, {e.g.,} from the duality result of the first and third authors \cite{McCannZhangArxiv}. 
  Similar explicit examples may be computed in two dimensions for $\Omega =
  B_1(0)$, $b(x,y) = x \cdot y$, $c(y) = |y|^2/2$ and $f(x) = 1$. Again, the minimizer is not $C^2$ across the boundary of the nonparticipation region. 
\end{remark}

 \section{The simplest case; an exposition of Caffarelli and Lions's result}
\label{sec:simplest-case-an}

In this section we prove Theorem \ref{thm:main} in the simplest setting: $b(x,y) = x \cdot y$, $f \equiv 1$ and $c(y) = |y|^2/2$. In this setting the result was proved by Caffarelli and Lions though never published. We present their result to introduce the reader to the main geometric ideas in our proof and indicate {both where we have simplified their argument, and where} the difficulties lie in extending the result to benefit functions satisfying B3.

\begin{theorem}
  Let $u$ minimize
  \[ \Phi[u]:= \int_{\X}\frac{1}{2}|Du|^2 + u - x \cdot Du \, dx,\]
  over the set of nonnegative convex functions. Then $u \in C^{1,1}_{\text{loc}}(\X)$. 
\end{theorem}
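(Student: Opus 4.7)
The strategy signposted in the abstract is to ``pinch'' $u$ between parabolas at each interior point. Concretely, for every $\X'\subset\subset\X$ I aim to produce $C,\rho>0$ such that at each $x_0\in\X'$ there is $p_0\in\partial u(x_0)$ with
$$u(x_0)+p_0\cdot(x-x_0) \;\leq\; u(x) \;\leq\; u(x_0)+p_0\cdot(x-x_0)+\tfrac C2|x-x_0|^2, \qquad x\in B_\rho(x_0).$$
The left inequality is the tangent plane bound, immediate from convexity; a standard covering then upgrades such a pointwise estimate into $u\in C^{1,1}_{loc}(\X)$. So all work goes into the right inequality.

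I would argue this by contradiction. If the upper bound failed uniformly, then for any prescribed $M$ one could find $x_0$ and $x_1 = x_0 + he$, $|e|=1$, with $h$ arbitrarily small and
$$u(x_1) - u(x_0) - p_0\cdot(x_1-x_0) \;\geq\; M h^2.$$
Writing $\ell_0(x):=u(x_0)+p_0\cdot(x-x_0)$ for the supporting plane, introduce the section (``cap'') $S_\epsilon := \{x\in B_r(x_0): u(x) \leq \ell_0(x)+\epsilon\}$, and construct a competitor $w$ which agrees with $u$ outside $S_\epsilon$ and is obtained on $S_\epsilon$ by a convex-envelope surgery --- for instance replacing $u|_{S_\epsilon}$ by the supremum of its affine minorants matching $u$ on $\partial S_\epsilon$. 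Then $w$ is convex, $w\leq u$, and $w\geq 0$ provided $\epsilon$ is small enough that the nonparticipation region $\{u=0\}$ is untouched; in particular $w\in U_0$ is an admissible competitor.

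The crux is the strict inequality $\Phi[w]<\Phi[u]$. Expanding
$$\Phi[u]-\Phi[w] \;=\; \int_{S_\epsilon}\Bigl[\tfrac12(|Du|^2-|Dw|^2)+(u-w)-x\cdot(Du-Dw)\Bigr]\,dx,$$
the excess hypothesis forces a quantitative gap between $Du$ and $Dw$ which, when integrated along one-dimensional slices of $S_\epsilon$ parallel to $e$, converts the pointwise excess $Mh^2$ into a bulk lower bound of order $M\cdot|S_\epsilon|\cdot\epsilon$. This dominates the $O(|S_\epsilon|\epsilon)$ contribution from the lower-order terms $(u-w)$ and $x\cdot(Du-Dw)$ once $M$ is large enough, giving $\Phi[w]<\Phi[u]$ and contradicting the minimality of $u$.

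\textbf{Main obstacle.} The principal difficulty is making the energy drop quantitative: the envelope $w$ is only defined implicitly, and the cap $S_\epsilon$ has an unknown shape, so calibrating $\epsilon$ and $r$ against $M$ so that the gradient-squared gain strictly dominates the lower-order losses is delicate. A secondary subtlety is $x_0$ near the free boundary of $\{u>0\}$, where $p_0=0$ and the participation constraint $u\geq 0$ binds, limiting the room available to push $u$ down; Caffarelli and Lions handle this as a separate case, which the present paper streamlines via a single unified construction.
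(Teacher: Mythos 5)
Your proposal diverges from the paper's proof in a way that matters: you perturb \emph{downward}, constructing a competitor $w\leq u$ by cutting off a cap, while the paper perturbs \emph{upward}, taking $u_h := \max\{u,\sigma_h\}\geq u$ where $\sigma_h(x) = \frac{h}{2r}(x^1+r)$ is a carefully \emph{tilted} affine function. The choice of direction is not cosmetic. Because $u_h\geq u$, the participation constraint $u\geq 0$ is automatically preserved, so $u_h$ is admissible with no caveats. Your downward surgery, by contrast, can push below $0$ precisely near the free boundary $\partial\{u>0\}$ --- and, as the paper's one-dimensional example $u(x)=(x-\tfrac12)_+^2$ shows, that free boundary is exactly where the $C^{1,1}$ bound is saturated and $u\notin C^2$. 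You label this a ``secondary subtlety'' that Caffarelli--Lions ``handle as a separate case,'' but the paper's whole point (stated explicitly in the introduction) is that perturbing up avoids the case split altogether; pushing down is the direction the constraint actively blocks, so there is no ``small enough $\epsilon$'' that saves the construction at a point on the free boundary.

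A second gap concerns the geometry of your section. You take $S_\epsilon=\{u\leq \ell_0+\epsilon\}$ with $\ell_0$ the support plane at $x_0$ --- an \emph{untilted} lift. This set has no useful slab containment: for anisotropic $u$ it can be arbitrarily eccentric, and intersecting with $B_r(x_0)$ by hand creates boundary terms on $\partial B_r$ that are not controlled. The paper's $\sigma_h$ is instead tilted so that it vanishes at $x^1=-r$ and has slope roughly half of $Du(re_1)$; this is exactly what yields the slab containment $S=\{\sigma_h>u\}\subset\{|x^1|\leq r\}$, via $\sigma_h<0\leq u$ for $x^1<-r$ on one side and a supporting-hyperplane argument at $re_1$ on the other. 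That slab bound is what makes the Jensen/Fubini step produce $\int_S|Du-D\sigma_h|^2\geq C\,|S|\,h^2/r^2$, and hence $h\leq Cr^2$. Your one-dimensional slicing heuristic gestures at the same estimate but has no mechanism to produce the needed thin-slab geometry. Finally, because your $w$ is a genuine convex envelope rather than a single affine function, the algebraic identity $\int_S(|Du|^2-|D\sigma_h|^2)=\int_S|Du-D\sigma_h|^2$ --- which the paper uses to turn the minimality inequality into a clean Dirichlet-energy bound after one integration by parts --- is not available to you, and the sign of $\int_{S_\epsilon}(|Du|^2-|Dw|^2)$ is not what your outline assumes (on your model example the convex-envelope competitor actually has \emph{more} Dirichlet energy than $u$ on the cap). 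The proposal captures the high-level idea of using minimality against a surgically modified competitor and the Jensen-on-slices mechanism, but the direction of the perturbation, the tilt, and the affine form of the competitor are all load-bearing features of the actual proof that the plan is missing.
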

\begin{proof}
  The work of Carlier and Lachand-Robert \cite{MR1811127} implies $u \in C^{1}(\overline{\X})$.  We fix $x_0 \in \X$, and let $l_{x_0}(x) = u(x_0) + Du(x_0)\cdot(x-x_0)$ be the support plane at $x_0$. To show $u \in C^{1,1}_{\text{loc}}(\X)$ it suffices to show there is $C>0$ such
  \begin{equation}
    \label{eq:c-goal1}
    \sup_{B_r(x_0)}|u-l_{x_0}| \leq Cr^2,
  \end{equation}

  After translating and subtracting the support we assume $x_0 = 0$ and $l_{x_0} \equiv 0$. Thus, we aim to show
  \begin{equation}
    \label{eq:c-goal2}
     h:=\sup_{B_r(0)}u \leq Cr^2.
  \end{equation}
 Without loss of generality this supremum occurs at $re_1$, that is $h = u(re_1).$ 
Consider the perturbation
\begin{align*}
  u_{h}(x) &:= \text{max}\{u(x), \frac{h}{2r}(x^1+r)\}\\
  &= \text{max}\{u(x), \sigma_h(x)\},\text{ where } \sigma_h(x) :=\frac{h}{2r}(x^1+r).
\end{align*}
It's clear $u_h$ is admissible for the minimization problem. Namely $u_h \geq u$ and is convex. The key idea of the proof is that the minimality condition  $\Phi[u] \leq \Phi[u_h]$ can be reworked, via suitable estimates, into \eqref{eq:c-goal2}.

First we prove the essential ``slab containment'' estimate
\begin{equation}
  \label{eq:c-contain0}
 S := \{x \in \X; u_h(x) > u(x)\}  \subset \{x \in \X ; \vert x^1\vert \leq r\}.
  \end{equation}
  To obtain \eqref{eq:c-contain0} note if $x^1 < -r$ then $\sigma_h(x) < 0 = l_{x_0}(x) \leq u(x)$ so
\begin{equation}
  \label{eq:c-contain1}
  S \subset \{ x \in \X ; x^1 \geq - r\}.
\end{equation}
Containment in the other direction, follows by computing $D\sigma_h(re_1) - Du(re_1) = -\kappa e_1$ for some $\kappa \geq 0$. Thus the convex set $S = \{\sigma_h - u > 0\}$ has $\{x ; x^1 = r\}$ as a support hyperplane and \eqref{eq:c-contain0} follows. 

It's evident that convexity of $u$  was the essential property which allowed us to prove \eqref{eq:c-contain0}.  The extension to general benefit functions requires the condition B3 and delicate estimates in sufficiently small neighbourhoods to obtain an analogue of \eqref{eq:c-contain0}.

Now we employ $\Phi[u] \leq \Phi[u_h]$. We make the simplifying assumption $S \subset \subset \X$. When this is not true additional steps are needed. Caffarelli and Lions proceeded by an additional {perturbation argument whereas we employ a more straightforward dilation estimate.} In any case, when $S \subset \subset \X$ the inequality $\Phi[u] \leq \Phi[u_h] $ is
\[ \int_{\X} {[\frac{1}{2}|Du|^2 + u - x \cdot Du]} \, dx \leq \int_{\X} {[\frac{1}{2}|Du_h|^2 + u_h -x \cdot Du_h]} \, dx,\]
which, after employing $u_h = u$ on $\X \setminus S$ and integrating by parts becomes
\begin{equation}
  \label{eq:c-comb1}
  \frac{1}{2}\int_{S}|Du - D\sigma_h|^2 \, dx \leq (n+1) \int_{S} (\sigma_h-u) \, dx \leq C(n)|S|h.
\end{equation}
Here we've used $\int_{S} {(|Du|^2 - |D\sigma_h|^2)} = \int_{S}|Du-D\sigma_h|^2$. We claim
\begin{align}
\label{eq:c-comb3}  \text{ and } &\int_{S}|Du - D\sigma_h|^2 \, dx \geq C(n)\frac{h^2}{r^2}|S|,
\end{align}
which with \eqref{eq:c-comb1} yields \eqref{eq:c-goal2} and completes the proof. As we will show, in the bilinear case \eqref{eq:c-comb3} follows from careful estimates using the convexity of $u$. For more general benefit functions we rely on similar intuition but the estimates are more technical. The condition B3 is used to ensure on sufficiently small neighbourhoods similar, but more complicated, estimates hold. 

We conclude by indicating the proof of \eqref{eq:c-comb3}. Let $S/2$ be the dilation of $S$ by a factor $1/2$ with respect to the origin. Let $P(x)$ be the projection onto $\{x ;x^1 = 0\}$ defined by $P(x^1,x') = (0,x')$. For each $(0,x') \in P(S/2)$ the set 
\[ (P^{-1}((0,x')) \cap S) \setminus (S/2)\]
is two line segments of length less than $2r$ (by \eqref{eq:c-contain0}). Let $l_{x'}$ denote the one with greater $x^1$ coordinate and set $l_{x'} = [a_{x'},b_{x'}]\times \{x'\}$. Because $(a_{x'},x') \in \partial(S/2)$ there holds $(u - \sigma_h)(2a_{x'},2x') = 0$. In addition $(u-\sigma_h)(0) = -h/2$ so by convexity of $u$, $(u-\sigma_h)(a_{x'},x') \leq -h/4$. Finally, because $(b_{x'},x) \in \partial S$, $(u-\sigma_h)(b_{x'},x') = 0$. Thus Jensen's inequality implies
\[ \int_{l_{x'}}|Du - D\sigma_h|^2 \, d \mathcal{H}^1 \geq \frac{1}{2r}\left(\int_{a_{x'}}^{b_{x'}}D_{1}(u - \sigma_h)(t,x') \, dt\right)^2\geq  C\frac{h^2}{r}.\]
Using Fubini's theorem to integrate $S/2$ over all $l_{x'}$  we have
\begin{align*}
 \int_{S}|Du - D\sigma_h|^2\, dx &\geq\int_{P(S/2)}\int_{l_{x'}}|Du_h - D\sigma_h|^2 \, d\mathcal{H}^1 dx'\\
                                     &\geq C \mathcal{H}^{n-1}(P(S/2))\frac{h^2}{r}\geq C(n) |S|\frac{h^2}{r^2}
\end{align*}
Here the final inequality which is \eqref{eq:c-comb3}, and in which $\mathcal{H}^{n-1}$ denotes $n-1$ dimensional Hausdorff measure, follows because \eqref{eq:c-contain0} implies $\mathcal{H}^{n-1}(P(S/2))r \geq C(n) |S|$. 
\end{proof}

\section{Background material: convexification and localization}
\label{sec:backgr-mater-conv}

In this section we recall some standard tools for studying $b$-convexity \cite{MR3419751,MR3067826}. Fix a $b$-convex function $u:\X \rightarrow \mathbf{R}$ and $(x_0,y_0) \in \X \times \Y$. We assume $b_{x^i,y^j}(x_0,y_0) = \delta_{ij}$ which may always be realized after a suitable affine transformation. Define new coordinates
\begin{align}
 \tilde{x}(x) := b_y(x,y_0) - b_y(x_0,y_0),\label{eq:tilde-x-def}\\
  \tilde{y}(y) := b_x(x_0,y) - b_x(x_0,y_0).\label{eq:tilde-y-def}
\end{align}
We use the notation $x(\tilde{x})$ to denote the inverse, that is the corresponding $x$ such that for a given $\tilde{x}$ \eqref{eq:tilde-x-def} holds. Similarly for $y(\tilde{y})$, for example $x(0) = x_0$ and $y(0) = y_0$. We note all this is well--defined by condition B1. 

We introduce also the transformed functions
\begin{align}
  \tilde{u}(\tilde{x}) = u(x) - [u(x_0) + b(x,y_0) - b(x_0,y_0) ],\label{eq:tilde-u-def}\\
  \tilde{b}(\tilde{x},\tilde{y}) = b(x,y) - [b(x_0,y) + b(x,y_0) - b(x_0,y_0)],\label{eq:tilde-b-def}
\end{align}
for $x = x(\tilde{x})$ and $y = y(\tilde{y})$. The definitions given for $b$ hold for $\tilde{b}$. That is, there is a notion of $\tilde{b}$-convex functions and $\tilde u$ defined above is $\tilde b$-convex. There is a corresponding $\tilde{Y}\tilde{u}$ mapping and it is straightforward to verify that (with the obvious notation) $\tilde{Y}\tilde{u}(\tilde{x}) = \tilde{y}(Yu(x(\tilde{x})))$.

The following results, using the notation introduced above, render the abstract theory of $\tilde{b}$-convexity more tractable. 
\begin{lemma} {[Facts about $\tilde{b}$-convex geometry]}\label{lem:transformations}
  \begin{enumerate}[(i)]
  \item The function $\tilde{u}:\tilde{\X} \longrightarrow [0,\infty)$ is convex and $\tilde{u}(0) = 0$. It is uniformly convex if $u$ is uniformly $b$-convex.
  \item As a consequence, for all $h \geq 0$ the set
    \[ S_h := \{\tilde{x} \in \tilde{\X} ; \tilde{u}(\tilde{x}) < h \},\]
    is convex.
  \item We have the expansion
    \begin{equation}
      \label{eq:expansion}
       \tilde{b}(\tilde{x},\tilde{y}) = \tilde{x} \cdot \tilde{y} + a_{ij,kl}\tilde{x}^i\tilde{x}^j\tilde{y}^k\tilde{y}^l.
    \end{equation}
    Here the $a_{ij,kl}$ are smooth functions on $\overline{\X} \times \overline{\Y}$ which arise from taking a Taylor series to fourth order. 
  \end{enumerate}
\end{lemma}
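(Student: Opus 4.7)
I would address the three parts in the order (iii), (i), (ii), since (ii) will be an immediate corollary of (i).

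For (iii), the plan is to exploit the boundary vanishing of $\tilde{b}$. The definition \eqref{eq:tilde-b-def} immediately gives $\tilde{b}(\tilde{x},0) \equiv 0$ and $\tilde{b}(0,\tilde{y}) \equiv 0$, while a chain rule computation combined with the normalization $b_{x^i y^j}(x_0,y_0) = \delta_{ij}$ yields $\tilde{b}_{\tilde{x}^i \tilde{y}^j}(0,0) = \delta_{ij}$ (the Jacobians of the two coordinate changes reduce to the identity at the origin). Setting $g(\tilde{x},\tilde{y}) := \tilde{b}(\tilde{x},\tilde{y}) - \tilde{x}\cdot\tilde{y}$, these identities force $g_{\tilde{x}^\alpha}(0,\tilde{y}) = 0$ for $|\alpha| \le 1$ and $g_{\tilde{y}^\beta}(\tilde{x},0) = 0$ for $|\beta| \le 1$, so every mixed partial of $g$ of order at most three at the origin vanishes. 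Two successive applications of Hadamard's lemma, first extracting two powers of $\tilde{y}$ using the vanishing on $\{\tilde{y}=0\}$ and then two powers of $\tilde{x}$ from the remaining smooth coefficient, then produce the asserted factorization $g = a_{ij,kl}(\tilde{x},\tilde{y})\,\tilde{x}^i\tilde{x}^j\tilde{y}^k\tilde{y}^l$ with smooth $a_{ij,kl}$.

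For (i), the equality $\tilde{u}(0)=0$ is tautological from \eqref{eq:tilde-u-def}, and $\tilde{u}\ge 0$ is just the $b$-support inequality \eqref{eq:support-def} at $x_0$ (with $y_0\in Yu(x_0)$ implicit). The substantive content is convexity in $\tilde{x}$. The plan is to write $\tilde{u}$ as a pointwise supremum of transformed $b$-affine supports: starting from $u(x) = \sup_y[b(x,y)-v(y)]$ and subtracting $u(x_0)+b(x,y_0)-b(x_0,y_0)$ gives
\[
\tilde{u}(\tilde{x}) = \sup_{y\in\Y}\bigl\{[b(x(\tilde{x}),y) - b(x(\tilde{x}),y_0)] + \mathrm{const}(y)\bigr\}.
\]
Under hypothesis B3, Loeper's maximum principle (the DASM property of optimal transport) ensures that for each fixed $y$ the map $\tilde{x}\mapsto b(x(\tilde{x}),y) - b(x(\tilde{x}),y_0)$ is convex in $\tilde{x}$; consequently $\tilde{u}$, being a supremum of convex functions, is itself convex. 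For the uniform convexity claim, I would run this argument quantitatively by computing $D^2_{\tilde{x}}\tilde{u}$ via the chain rule: after isolating the nonnegative contribution delivered by B3, the hypothesis $D^2_x u - b_{xx}(x,Yu(x)) \ge \epsilon I$ transfers to a uniform positive lower bound on $D^2_{\tilde{x}}\tilde{u}$.

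Part (ii) is then immediate: $S_h$ is a sublevel set of the convex function $\tilde{u}$ and therefore convex. The main obstacle in the plan is the convexity step in (i): the $b$-support of $u$ at a point $x_1\ne x_0$ uses a slope $y_1\ne y_0$, whereas the coordinate change $x\mapsto\tilde{x}$ is anchored to $y_0$, so convexity of the transformed support in these ``tilted'' coordinates is not formal and is precisely where the Ma--Trudinger--Wang-type hypothesis B3, through Loeper's maximum principle, is indispensable. Everything else reduces to chain rule bookkeeping and iterated Hadamard factorization.
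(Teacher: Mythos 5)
Your plans for (iii) and (ii) follow the same line as the paper's Appendix~\ref{sec:proof-lemma-reflem}: in (iii) you record the vanishing of $\tilde b$, $\tilde b_{\tilde x}$ and $\tilde b_{\tilde y}$ on the coordinate hyperplanes $\{\tilde x=0\}$ and $\{\tilde y=0\}$ (where $b_{x^iy^j}(x_0,y_0)=\delta_{ij}$ enters) and then pull out the quartic remainder; iterated Hadamard factorization and the paper's Taylor expansion with integral remainder are the same device. Part (i) is where you genuinely diverge. The paper computes the Hessian of $h:=u-b(\cdot,y_0)$ in the $\tilde x$ coordinates, inserts the $b$-convexity inequality $D^2u\geq b_{xx}(x,Y(x,Du))+\epsilon I$, and after Taylor-expanding $p\mapsto b_{xx}(x,Y(x,p))$ uses B3 to kill the error term, yielding $D^2_{\tilde x}h\geq c_1^2\epsilon I$. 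You instead write $\tilde u$ as a supremum of the transformed $b$-affine supports and conclude convexity as a sup of convex functions. That route is cleaner for plain convexity since it asks for no a priori second-order regularity of $u$; but, as you note yourself, it cannot deliver uniform convexity (a sup of uniformly convex functions need not be uniformly convex), so your fallback there is exactly the Hessian computation the paper runs throughout, and the two proofs converge at that point.

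One step in your plan deserves more care. Loeper's maximum principle (Theorem~\ref{thm:loeper-max}) concerns $b^*$-segments in $y$ with $x_0$ fixed, and under A3w it yields only a max-type inequality (quasi-convexity); the upgrade to genuine convexity under B3 is the separate fact the paper attributes to \cite{MR2654086}. What you actually need for the sup argument is the \emph{dual} statement: that $\tilde x\mapsto \tilde b(\tilde x,\tilde y)=b(x(\tilde x),y)-b(x(\tilde x),y_0)-\text{const}$ is convex along $\tilde x$-segments for each fixed $\tilde y$, i.e.\ along $b$-segments in $x$ with $y_0$ fixed. That is not literally Theorem~\ref{thm:loeper-max} nor its B3-strengthening; it holds because B3, like A3w, is symmetric under interchanging the roles of $x$ and $y$, a nontrivial structural fact that should be invoked explicitly. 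Alternatively, you can avoid appealing to the symmetry by reproducing the paper's Hessian computation with the smooth function $b(\cdot,y)$ in place of $u$ — that computation shows directly that B3 makes $\tilde b(\cdot,\tilde y)$ convex — and then taking the supremum over $y$. Either fix closes the plan.
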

  These results can be found in, or at least adapted from,  \cite{MR3419751,MR3067826,trudiner-wang-preprint}. For completeness we include a proof adapted to our setting in Appendix \ref{sec:proof-lemma-reflem}. 

A crucial result in optimal transport is the following estimate due to Loeper \cite{MR2506751}, and now commonly called the Loeper maximum principle.

\begin{theorem}(Loeper maximum principle \cite[Theorem 3.2]{MR2506751})\label{thm:loeper-max} 
  Let $x_0 \in \overline{\X}$ be given. Assume that $(y_t)_{t \in [0,1]}$ is a curve in $\Y$ such that
  \begin{equation}
    \label{eq:b*-seg-def}
    b_x(x_0,y_t) = (1-t)b_x(x_0,y_0)+tb_x(x_0,y_1).
\end{equation}
  Then for all $x \in \overline{\X}$ and $t \in [0,1]$ there holds
  \[ b(x,y_t) - b(x_0,y_t) \leq \max\{ b(x,y_0) - b(x_0,y_0) , b(x,y_1) - b(x_0,y_1) \}.\]
\end{theorem}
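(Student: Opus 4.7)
The plan is to argue by contradiction. Fix $x \in \overline{\X}$ and define
\[ g(t) := b(x, y_t) - b(x_0, y_t). \]
If the stated inequality fails at some $t^* \in (0,1)$, then $g$ attains an interior maximum at some $\bar t \in (0,1)$ with $g(\bar t) > \max\{g(0), g(1)\}$, forcing $g'(\bar t) = 0$ and $g''(\bar t) \leq 0$. The strategy is to use B3 to show that $g$ is in fact convex on $[0,1]$, which is incompatible with the existence of such a strict interior maximum.

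The first step is to pass to the ``Kantorovich'' parameter $p := b_x(x_0, y)$. The hypothesis on $y_t$ says precisely that $p_t := b_x(x_0, y_t) = (1-t)p_0 + t p_1$ is an honest affine segment and $y_t = Y(x_0, p_t)$. Introducing
\[ h(x, p) := b(x, Y(x_0, p)) - b(x_0, Y(x_0, p)), \]
we have $g(t) = h(x, p_t)$, and since $p_t$ is affine in $t$, convexity of $g$ reduces to convexity of $p \mapsto h(x, p)$ for each fixed $x$. Because $h(x_0, \cdot) \equiv 0$ and $\nabla_x h(x_0, p) = b_x(x_0, Y(x_0, p)) - p = 0$, a second-order Taylor expansion in $x$ along the segment $x_s := x_0 + s(x - x_0)$ gives
\[ h(x, p) = \int_0^1 (1-s)\, b_{xx}\bigl(x_s, Y(x_0, p)\bigr)(x - x_0,\, x - x_0)\, ds, \]
so that, writing $\xi := x - x_0$, the task becomes showing $D^2_{pp}\bigl[b_{xx}(x_s, Y(x_0, p))\bigr](\eta,\eta)(\xi,\xi) \geq 0$ for each $s \in [0, 1]$ and all $\eta$.

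The step I expect to be the main obstacle is the identification of this mixed expression with the B3 tensor: B3 controls $D^2_{pp}[b_{x^ix^j}(x, Y(x, p))]$ in which $x$ is simultaneously the base point of $Y$ and the differentiation variable of $b_{xx}$, whereas in our integrand $Y$ is anchored at $x_0$ while $b_{xx}$ is evaluated at $x_s$. The remedy is to reparametrize once more via $q := b_x(x_s, Y(x_0, p))$, a diffeomorphism in $p$ by B1, which enforces $Y(x_s, q) = Y(x_0, p)$ and rewrites the integrand as $b_{xx}(x_s, Y(x_s, q))$, now in the canonical form controlled by B3. The chain rule for $D^2_{pp}$ then splits into a principal piece $D^2_{qq}[b_{xx}(x_s, Y(x_s, q))](q_p\eta, q_p\eta)(\xi,\xi) \geq 0$ directly by B3, plus a correction involving $q_{pp}$ paired with $D_q b_{xx}$; verifying that this correction has the correct sign or cancels after the $s$-integration (using the constraint that defines the $b$-segment) is the algebraic heart of the argument, and is handled in \cite{MR2506751,MR3067826} by a careful integration-by-parts exploiting the symmetry $\partial_{x}\partial_p = \partial_p \partial_x$. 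Once $p \mapsto h(x, p)$ is established to be convex, $g$ is convex, the putative strict interior maximum is ruled out, and the proof is complete.
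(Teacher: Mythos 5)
Your overall strategy---reduce the maximum principle to convexity of $t\mapsto g(t)=b(x,y_t)-b(x_0,y_t)$ under B3---is exactly the reduction the paper itself flags in the sentence following the theorem statement: it notes that under B3 (as opposed to Loeper's weaker A3w) this $g$ is in fact convex, and cites Trudinger--Wang \cite{MR2654086} for this; the paper supplies no proof of its own. So the proposal is really a partial reconstruction of that reference rather than an alternative to an argument the paper contains. Within that plan there is one small slip: for $h(x,p)=b(x,Y(x_0,p))-b(x_0,Y(x_0,p))$ you assert $\nabla_x h(x_0,p)=b_x(x_0,Y(x_0,p))-p=0$, but the ``$-p$'' has no source; in fact $\nabla_x h(x_0,p)=b_x(x_0,Y(x_0,p))=p$, so the Taylor expansion carries an extra term $p\cdot(x-x_0)$. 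That term is affine in $p$ and therefore annihilated by $D^2_{pp}$, so the conclusion of the step survives, but the displayed identity is wrong as written.

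The substantive gap is the one you identify yourself and then defer. After the reparametrization $q=b_x(x_s,Y(x_0,p))$, the chain rule gives
\[
D^2_{pp}\bigl[b_{xx}(x_s,Y(x_s,q(p)))\bigr]
= D^2_{qq}\bigl[b_{xx}(x_s,Y(x_s,q))\bigr](q_p\,\cdot,\,q_p\,\cdot)
\;+\; D_{q}\bigl[b_{xx}(x_s,Y(x_s,q))\bigr]\cdot q_{pp},
\]
and only the first term is controlled by B3. The second term involves third derivatives of $b$ evaluated at the two different base points $x_0$ and $x_s$, and a direct computation shows it vanishes at $s=0$ (where $q_{pp}=0$) but is generically nonzero for $s>0$, with no evident sign. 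Nothing in the proposal bounds it against the favourable B3 contribution, which can itself degenerate. Asserting that this is ``handled in \cite{MR2506751,MR3067826} by a careful integration-by-parts'' is precisely the content of the theorem you set out to prove; without producing that identity, the argument is a sound outline of the known strategy rather than a proof.
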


Loeper's result was proved under A3w, however under the current hypotheses (B3 as opposed to A3w), the function $t \in [0,1] \mapsto b(x,y_t) - b(x_0,y_t)$ is in fact convex \cite{MR2654086}.
We remark that when \eqref{eq:b*-seg-def} holds $(y_t)_{t \in [0,1]}$ is called the $b^*$-segment joining $y_0$ to $y_1$ with respect to $x_0$.

\section{Proof of Theorem \ref{thm:main} assuming a key lemma}
\label{sec:key-lemma}

In this section we state Lemma \ref{lem:main} which, whilst not of independent interest, simplifies the proof of Theorem \ref{thm:main}. Indeed, following the statement of Lemma \ref{lem:main} we immediately prove Theorem \ref{thm:main} and later, in Section \ref{sec:proof-lemma}, prove Lemma \ref{lem:main}.

\begin{lemma}[Geometry of a carefully chosen trial function]\label{lem:main}
  Let $X' \subset \subset \X$ and $d = \text{dist}(X',\partial \X)$. There is $r_0>0$ depending only on $b$ and $d$ and constants $C_1,C_2 >0$ depending on $b,c,f,d$ such that the following property holds: If $u : \X \rightarrow \mathbf{R}$ is $b$-convex and $x_0 \in \X', y_0 \in Yu(x_0)$, then provided $r < r_0$ and
  \[h:=\sup_{B_r(x_0)}[u(x) - (b(x,y_0)-b(x_0,y_0)+u(x_0))], \]
  is positive, there is a $b$-affine function $p_{y}(\cdot):=b(\cdot,y)+a$ such that:
  \begin{enumerate}
  \item The section $S:= \{x \in \X; u(x) < p_y(x)\}$ has positive measure. \\
  \item On $S$ we have
    \begin{equation}
\label{eq:height-comparison}
      \sup_{x \in S} [p_{y}(x) - u(x)] \leq h .
    \end{equation}
  \item There holds
    \begin{align}
  \label{eq:energy-comparison}     \frac{1}{|S|} \int_{S}  \big[\big(c(y) &- b(x,y)\big)  - \big(c(Yu(x)) - b(x,Yu(x)\big)\big]\ f(x) \  dx\\
                                         \nonumber          &\quad\leq  C_1h - C_2\frac{h^2}{r^2}. 
                                         \end{align}
  \end{enumerate}
\end{lemma}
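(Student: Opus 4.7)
The strategy is to work in the normalized coordinates of Lemma \ref{lem:transformations} centered at $(x_0,y_0)$ and to mimic the bilinear construction of Section \ref{sec:simplest-case-an}, handling the lower-order curvature corrections from the expansion $\tilde b(\tilde x,\tilde y)=\tilde x\cdot\tilde y+O(|\tilde x|^2|\tilde y|^2)$ by taking $r_0$ small. After rescaling $b_{xy}(x_0,y_0)$ to the identity and passing to tilde variables, $\tilde u$ is convex, nonnegative, vanishes at the origin, and $h$ is comparable to $\tilde h:=\sup_{\tilde B_r(0)}\tilde u$. Let $\tilde x_h\in\overline{\tilde B_r(0)}$ achieve $\tilde h$; after rotating, assume $\tilde x_h=\tilde r e_1$ with $\tilde r$ comparable to $r$, and define the trial $\tilde b$-affine function
\[
\tilde p(\tilde x):=\tilde b(\tilde x,\tilde y)+\tilde a,\qquad \tilde y:=(\kappa_1 h/r)e_1,\qquad \tilde a:=\kappa_2 h,
\]
with constants $\kappa_1,\kappa_2\in(0,1/2]$ to be tuned. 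Pulling $\tilde p$ back through \eqref{eq:tilde-x-def}--\eqref{eq:tilde-y-def} supplies the $b$-affine $p_y(\cdot)=b(\cdot,y)+a$ called for in the lemma.

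The central geometric step is a slab containment $S\subset\{|\tilde x^1|\leq C r\}$ analogous to \eqref{eq:c-contain0}. On the side $\tilde x^1\ll -r$, the estimate $\tilde p(\tilde x)\leq \kappa_1 h(\tilde x^1/r)+\kappa_2 h+O(h^2)\leq 0\leq \tilde u$ rules out points of $S$. On the opposite side I invoke the Loeper maximum principle (Theorem \ref{thm:loeper-max}), which compensates for the failure of $\tilde u-\tilde p$ to be convex and allows the support-hyperplane argument of Section \ref{sec:simplest-case-an} to run up to a perturbation controlled by the expansion in Lemma \ref{lem:transformations}(iii). Once the slab is in hand, property (1) is immediate from $\tilde p(0)=\tilde a>0=\tilde u(0)$, and property (2) follows by combining the slab bound with the expansion: $\tilde p-\tilde u\leq\tilde p\leq (\kappa_1+\kappa_2)h+O(h^2)\leq h$ after fixing $\kappa_1+\kappa_2$ slightly less than $1$ and taking $r_0$ small.

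For (3), I use Remark \ref{rem:unif-conv}, so $G(x,p):=c(Y(x,p))-b(x,Y(x,p))$ is uniformly convex in $p$ with modulus $\gamma>0$. Because $Du(x)=b_x(x,Yu(x))$ almost everywhere by the envelope identity, the integrand in \eqref{eq:energy-comparison} rewrites as $G(x,b_x(x,y))-G(x,Du(x))$, and uniform convexity yields
\[
G(x,b_x(x,y))-G(x,Du(x))\leq \nabla_p G(x,b_x(x,y))\cdot\nabla(p_y-u)-\tfrac{\gamma}{2}|\nabla(p_y-u)|^2.
\]
Integrating over $S$ against $f$, the first term integrates by parts (using $p_y-u=0$ on $\partial S$) to $-\int_S(p_y-u)\operatorname{div}[f\nabla_p G(\cdot,b_x(\cdot,y))]\,dx$, bounded by $C_1 h|S|$ via property (2) and uniform bounds on $b,c,f$. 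The second term is controlled by $-C_2(h^2/r^2)|S|$ through the Fubini--Jensen argument at the end of Section \ref{sec:simplest-case-an} transplanted to tilde coordinates: slab containment, the lower bound $\tilde p-\tilde u\geq \tilde a/2$ on a half-dilate (from convexity of $\tilde u$), and the vanishing of $\tilde p-\tilde u$ on $\partial S$ combine to give $\int_S|\nabla(p_y-u)|^2\,dx\geq c(h^2/r^2)|S|$, with $c$ depending on the lower bound $\lambda$ on $f$ and on $\gamma$.

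The main obstacle is to make the slab containment and the Fubini lower bound robust under the curvature of $b$: in the bilinear case $u-\sigma_h$ is concave so $S$ is automatically convex and aligned with $e_1$, while in the quasilinear case $\tilde u-\tilde p$ is only \emph{almost} concave, with the $O(|\tilde x|^2|\tilde y|^2)$ perturbation from Lemma \ref{lem:transformations}(iii) to absorb. Neutralizing this requires Loeper's principle (and thereby B3) to keep the support-hyperplane argument alive, together with careful accounting of the perturbation at each inequality. This is precisely where B3 enters and is the source of the $r_0$ smallness in the lemma.
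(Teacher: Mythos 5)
Your proposal has the correct overall architecture---pass to normalized coordinates, choose a $b$-affine trial function with slope comparable to $h/r$ and height comparable to $h$, establish slab containment of width $\sim r$, and combine the uniform $b^{*}$-convexity of $c$ with a Fubini--Jensen estimate---but several of the middle steps are either imprecise or fail as stated, and the precise choice of trial function is not innocuous.

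The most significant gap is the choice of trial function. You prescribe $\tilde y = (\kappa_1 h/r)e_1$ exactly and $\tilde a=\kappa_2 h$ with $\kappa_1+\kappa_2<1$. The paper instead takes $y_{*}$ \emph{on the $b^{*}$-segment with respect to $re_1$} joining $y_1:=Yu(re_1)$ to $y_0=0$, chosen so that $b_x(re_1,y_{*})=\tfrac{h}{2r}e_1$, and normalizes $p_{y_{*}}$ so that $p_{y_{*}}(re_1)=u(re_1)$. This is what makes Loeper's maximum principle (Theorem \ref{thm:loeper-max}) applicable: its hypothesis is that the variable point lies on a $b^{*}$-segment whose endpoints yield $b$-supports of $u$, and this gives the height bound \eqref{eq:height-comparison} directly and \emph{before} any slab containment. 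Your $\tilde y$ is generally not on that $b^{*}$-segment, because $Y(re_1,\cdot)$ is not the identity even in tilde coordinates (the normalization $b_x(x_0,y)=y$ holds only at $x_0=0$), so your invocation of Loeper does not have a $b^{*}$-segment to act on; and Loeper controls $t\mapsto b(x,y_t)-b(re_1,y_t)$, not the convexity defect of $\tilde u-\tilde p$, so ``compensating for the failure of $\tilde u-\tilde p$ to be convex'' is not what it does. Furthermore, with $\kappa_1+\kappa_2<1$ your $\tilde p(\tilde r e_1)<\tilde u(\tilde r e_1)$, so $\tilde r e_1\notin\overline S$ and the support-hyperplane argument of Section \ref{sec:simplest-case-an}, which is anchored at $re_1\in\partial S$, loses its anchor point.

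You also omit the second change of coordinates. After the first change $\tilde x=b_y(x,y_0)$ the function $\tilde u$ is convex, but $\{\tilde u<\tilde p\}$ is a sublevel set of $\tilde u - \tilde p$, which is \emph{not} convex in these coordinates. The paper changes variables again to $\tilde x=b_y(x,y_{*})$; only then is the section convex (Lemma \ref{lem:transformations}(ii)), and only then do the angle estimate for the normal at $\tilde x(re_1)$ and the line-integral/Jensen estimate go through. Without this step, neither the slab containment nor the lower bound $\int_S|\nabla(p_y-u)|^2\gtrsim (h^2/r^2)|S|$ is justified. Finally, your integration by parts assumes $p_y-u=0$ on all of $\partial S$; that fails on $\partial S\cap\partial\X$, where one only knows $0\le p_y-u\le h$ from \eqref{eq:height-comparison}. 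The paper handles this with the perimeter-to-volume estimate $|\partial S\cap\partial\X|\le C|S|$ (Carlier--Lachand-Robert, Chen) to absorb the boundary term into $C_1 h|S|$; this is also the reason \eqref{eq:height-comparison} must be proved first rather than derived from slab containment as you propose.
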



\begin{remark}
  The constant $C_1$ depends on $\Vert f \Vert_{ C^{0,1}}$, $\Vert c \Vert_{C^1}$, $d$, $\|b\|_{C^{3,1}}$, $\|b_{x,y}^{-1}\|_{L^\infty}$, and diam$(\X\times\Y)$. The constant $C_2$ depends on all the quantities $C_1$ does and, in addition, $\lambda$ and the uniform $b^*$-convexity of $c$.  In our setting the local Lipschitz constant of any $b$-convex function is controlled by $\Vert b \Vert_{C^1}$.
\end{remark}
 
 Using Lemma \ref{lem:main}, Theorem \ref{thm:main} has the following short proof.

\begin{proof}[Proof. (Theorem \ref{thm:main})]
  Fix $\X' \subset \subset \X$, $x_0\in \X'$, and $y_0 \in Yu(x_0)$. Let
  \begin{align*}
  p_0(\cdot):=b(\cdot,y_0) - b(x_0,y_0)+u(x_0)
  \end{align*}
  be a $b$-support at $x_0$. Since the cost function is smooth, it's standard \cite[Proposition 1.2]{MR1351007} that to prove $u$ is $C^{1,1}_{\text{loc}}$ it suffices to prove that for all $r$ less than a given $r_0$ (independent of $u$, though dependent on $\X'$) there holds
  \[ \sup_{B_r(x_0)}|u - p_0| \leq Cr^2.\] Of course $u-p_0 \geq0$; we must prove $u - p_0 \leq Cr^2$. Here's where the lemma enters. Put
  \[ h = \sup_{B_r(x_0)} u - p_0,\]
  and note if $h = 0$, we're done. Thus, we assume $h>0$ and take the $b$-affine function $p_y$ and associated section $S$ given by Lemma \ref{lem:main}. We set
  \[ u_h = \text{max}\{u,p_y\}.\] Note $u_h$ is admissible for the Monopolist's problem: it is $b$- convex and not less than $u$. Thus, since $u$ is a minimizer for the principal-agent problem
  \[ 0 \leq L[u_h] - L[u]. \] Because $u_h$ differs from $u$ only on
  $S$ we compute $L[u_h]-L[u]$ and obtain
  \begin{align*}
    0 \leq \int_{S}\left[\big(c(y) - b(x,y)+p_y\big) - \big(c(Yu(x)) - b(x,Yu(x)) + u \big)\right]f(x) \ dx.
  \end{align*}
  From \eqref{eq:height-comparison} and \eqref{eq:energy-comparison} it's immediate that
  this inequality becomes
  \[ 0 \leq (C_1+\Vert f \Vert_{L^\infty})h - C_2\frac{h^2}{r^2}.\] This gives $h \leq Cr^2$ to establish the theorem.
\end{proof}

 \section{Proof of Lemma \ref{lem:main}: Section containment and energy estimate}\label{sec:proof-lemma}

 In the remainder of this paper we prove Lemma \ref{lem:main}. 
  A key element of the proof is to show that, in the coordinates described above, 
 the section $S$ can be chosen to lie in a slab of thickness proportional to $r$.  It is also crucial that the non-degenerate changes of coordinates exploited range over a compact family, so that the geometry cannot become too distorted.
 
\begin{proof}[Proof. (Lemma \ref{lem:main})]
  
  \textit{Step 1. (Setup)}  Chen showed $u\in C^1(\X)$ \cite{ChenThesis,MR4531938}.  Alternately, to make our proof logically independent of his,  one can use an approximation procedure, standard in the regularity theory for optimal transport, to assume $u$ is $C^1.$ Indeed for possibly non-differentiable $u$, \cite[Theorem 3.1]{MR2506751} yields a sequence of $C^{1}$ $b$-convex functions $u_k$ which converge to $u$ in the topology of local uniform convergence. Proving the Lemma for the $u_k$, then taking a limit yields the result for $u$. We note the limiting procedure uses that $C_1,C_2$ may be taken independent of $k$, the dominated convergence theorem, and the assumed estimates for $|c|,|b|,|f|$. 

  Working under the assumptions of Lemma \ref{lem:main} we fix $\X' \subset \subset \X$ and $x_0 \in \X'$ where without loss of generality $x_0 = 0$. Set $y_0 = Yu(x_0)$. Thus we can perform the change of variables and transformations \eqref{eq:tilde-x-def}--\eqref{eq:tilde-b-def}, after which the support at $0$ is $0$. By direct calculations we see if Lemma \ref{lem:main} holds for transformed quantities it holds for the original functions and coordinates, though with different constants $C_1,C_2.$ In particular distances in the $x$ and $\tilde{x}$ coordinates are comparable up to a constant depending on the eigenvalues of $b_{x,y}$ which are positive and bounded on $\overline{\X}\times \overline{\Y}$. 

  Thus it suffices to prove the lemma after applying the transformations \eqref{eq:tilde-x-def}--\eqref{eq:tilde-b-def}.  For ease of notation we retain $x,y,u,b$, that is, we don't switch to the notation $\tilde{x},\tilde{y},\tilde{u},\tilde{b}$ --- though keep in mind now $b$ satisfies \eqref{eq:expansion}.

  Now, without loss of generality $h:=\sup_{B_r}u$ occurs at $re_1$. Here $r < r_0$ where $r_0$ depending only on $b$ and $ d:= \text{dist}(\X',\partial \X)$ will be specified in the proof. Initially take $r_0 < d$. 

  Because $u$ attains a maximum over $\partial B_r$ at $re_1$ it has $0$ tangential derivative, that is $Du(re_1) = \kappa e_1$ for some $\kappa$. The convexity of (the transformed function) $u$ implies $\kappa \geq h/r$. Here $\kappa \leq \Vert b \Vert_{C^1}$. Since the gradient of the $b$-support at $re_1$ agrees with the gradient of $u$, $y_1 := Yu(re_1)$ satisfies $b_x(re_1,y_1) = \kappa e_1$. Moreover the transformed $b$ satisfies $b_x(re_1,0) = 0$. Therefore, by considering the $b^*$-segment joining $y_1$ to $0$ with respect to $re_1$, we obtain $y_{*}$ satisfying
  \begin{equation}
    \label{eq:y1/2-def}
    b_x(re_1,y_{*}) = \frac{h}{2r}e_1.
 \end{equation}
  The desired $b$-affine function will be  $p_{y_{*}}(x):=b(x,y_{*})-b(re_1,y_{*})+u(re_1)$. Because $p_{y_{*}}(0) - u(0) > 0$ (see \eqref{eq:origin-est}) we obtain that $S:=\{x ; u(x) < p_{y_{*}}(x)\}$ has positive measure.

\textit{Step 2. (Proof of \eqref{eq:height-comparison})}  Since $p_{y_1}(x):=b(\cdot,y_1) - b(re_1,y_1)+u(re_1)$ is a support at $re_1$ there holds $b(\cdot,y_1) - b(re_1,y_1) = p_{y_1} - h\leq u$.  The Loeper maximum principle (Theorem \ref{thm:loeper-max}) implies for all $x \in \X$
  \begin{equation}
    \label{eq:loeper-mp-app}
    b(x,y_{*})-b(re_1,y_{*}) \leq \text{max}\{0,b(x,y_1) - b(re_1,y_1)\} \leq u(x).
\end{equation}
  Here we've applied the Loeper maximum principle noting $0 = b(\cdot,0) - b(re_1,0)$ is also a support after the transformation \eqref{eq:tilde-b-def}. Estimate \eqref{eq:height-comparison} is obtained from \eqref{eq:loeper-mp-app} by adding $h = u(re_1)$ and subtracting $u(x)$. 

\textit{Step 3. (Basic estimates on $y_{*}$)}
  The rest of the proof, in which we obtain \eqref{eq:energy-comparison}, is based on estimates employing the Taylor expansion \eqref{eq:expansion}. As in the bilinear case, key to our result will be establishing the slab-containment condition
  \begin{equation}
    \label{eq:section-containment}
    S = \{x ; u(x) < p_{y_{*}}(x)\} \subset \{ x ; |x^1| \leq Cr\}
  \end{equation}
 for $C$ depending only on $b$.

  We begin with basic estimates showing $y_{*}$ is a small perturbation of $\frac{h}{2r}e_1$. By \eqref{eq:y1/2-def} and the expansion \eqref{eq:expansion} we have
  \begin{equation}
    \label{eq:y-expansion}
    \frac{h}{2r}e_1 = b_x(re_1,y_{*}) = y_{*} + f_{ijk}(re_1)^{i}(y_{*})^j(y_{*})^k,
 \end{equation}
  for $f_{ijk}$ a suitable smooth function arising from taking derivatives of \eqref{eq:expansion}. Choosing $r$ sufficiently small (depending only on $b$) and setting $\epsilon = r C(n)\sup|f_{ijk}|$ for $C(n)$ a suitable constant depending only on $n$ we have
  \begin{equation}
    \label{eq:angle-argument}
    \left\vert \frac{h}{2r}e_1 - y_{*}\right\vert \leq \epsilon|y_{*}|^2.
 \end{equation}
 An inequality of this form implies\footnote{We take these types of arguments from \cite{MR3419751}.} if $\theta$ is the angle $y_{*}$ makes with the $e_1$ axis, then $|\sin \theta| \leq \epsilon|y_{*}|$ (see Figure \ref{fig:geometric-arguments}(a)).

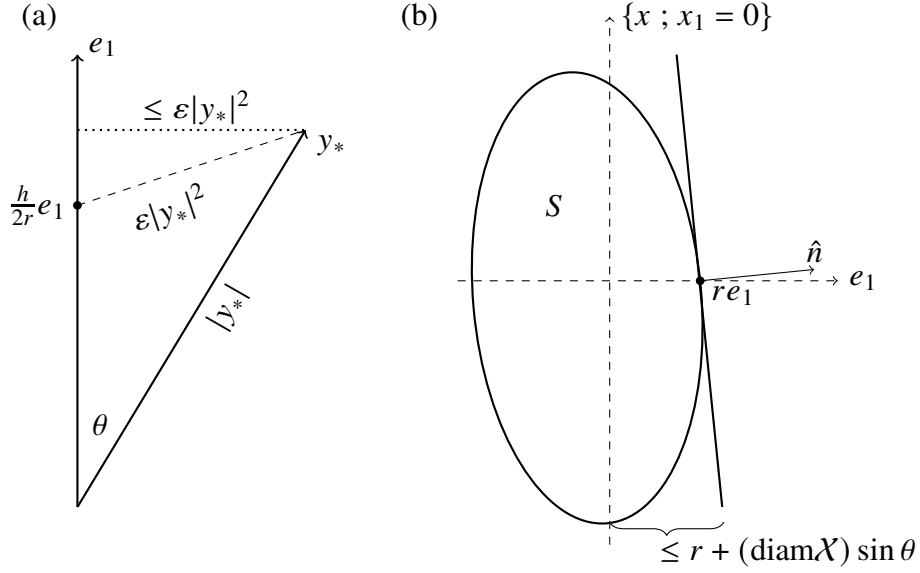
\begin{figure}[h]
  \centering
  \begin{tikzpicture}
    \node at (1.5,7.5) {(a)};
    \draw [->,thick] (2,1) -- (2,7);
    \node [right] at (2,7.1) {$e_1$};
    \draw [fill] (2,5) circle [radius=0.05];
    \node [left] at (2,5) {$\frac{h}{2r}e_1$};
    \draw [->,thick] (2,1) -- (5,6);
    \node [right] at (5,5.8) {$y_{*}$};
    \draw [dashed] (2,5) -- (5,6);
    \node [below,rotate=18.4] at (3.1,5.3) {$\epsilon|y_{*}|^2$};
    \draw [thick,dotted] (2,6) -- (5,6);
    \node [above,right] at (2.7,6.3) {$\leq \epsilon|y_{*}|^2$};
    \node [right,rotate=60] at (3.75,3.25) {$|y_{*}|$};
    \node [above] at (2.3,1.8) {$\theta$};
   
    \node at (6.5,7.5) {(b)};
    \draw [thick,rotate=5] (9,3) ellipse (1.5cm and 3cm); 
    \draw [fill] (10.19,4) circle [radius=0.05];
    \node [right] at (10.2,3.8) {$re_1$};
    \draw [dashed,->] (7,4) -- (12,4); 
    \node [right] at (12,4) {$e_1$};
    \draw [dashed,->] (9,0.5) -- (9,7.5);
    \node [right] at (9,7.5) {$\{x;x_1 = 0\}$};
    \draw [thick] (9.88,7) -- (10.48,1);
    \draw [->] (10.19,4)-- (11.69,4.15);
    \node [above] at (11.7, 4.1) {$\hat n$};
    \draw [decorate,decoration={brace,amplitude=5pt,mirror}] (9,0.8) -- (10.5,0.8);
    \node [below,right] at (9.5,0.4) {$ \leq r+(\text{diam} \X)\sin\theta $};
    \node [right] at (8,5) {$S$};
  \end{tikzpicture}
  \caption{Two geometric arguments. In (a) we illustrate how the estimate $ \left\lvert \frac{h}{2r}e_1 - y_{*} \right\rvert \leq \epsilon |y_{*}|^2$ yields the esimate $\sin \theta \leq \epsilon |y_{*}|$ (recall trigonometry). In (b) we illustrate that if the normal, $\hat n$, to a support plane to the convex body $S$ makes angle $\sin \theta \leq Cr$ with the $e_1$ axis $S \subset \{x ; x^1 \leq r + (\sin\theta)\text{diam}(\X) \leq Kr\}$.}
  \label{fig:geometric-arguments}
\end{figure}

  We perform a further change of coordinates, this time to the coordinates in which sections with respect to $p_{y_{*}}$ are convex. That is, set
  \begin{equation}
    \label{eq:y1/2-transform}
    \tilde{x}(x) := b_y(x,y_{*}),
  \end{equation}
  and consider  $\overline{u}, \overline{p}$ defined by $\overline{u}(\tilde{x}):=u(x)$ and $\overline{p}_{y_{*}}(\tilde{x}) := p_{y_{*}}(x)$ for the unique $x$ such that $\tilde{x}(x) = \tilde{x}$. We do not transform the $y$-coordinates.
  In these coordinates 
  \begin{align*}
    \tilde{S} &:= \{\tilde{x}; \overline{u}(\tilde{x})\leq \overline{p}_{y_{*}}(\tilde{x}) \}\\
   \text{ and }\quad \tilde{S_0} &:= \{\tilde{x}; 0 \leq \overline{p}_{y_{*}}(\tilde{x})\},
  \end{align*}
  are convex by Lemma \ref{lem:transformations} (ii).

  \textit{Step 4. (The containment condition \eqref{eq:section-containment})} Now we establish \eqref{eq:section-containment}. We only need this estimate in the $\tilde{x}$ coordinates, so it is in that setting we prove it.  We begin with the observation that $\tilde S \subset \tilde S_0$ hence $\tilde S_0^c \subset \tilde S^c$, where $^c$ denotes complementation. This follows from $\overline{u} \geq 0$. Thus to prove \eqref{eq:section-containment} it suffices to prove $\tilde{S} \subset \{\tilde{x}; \tilde{x}^1 \leq Cr\}$ and $\tilde{S_0} \subset \{\tilde{x} ; \tilde{x}^1 \geq -Cr\}$. Let's start with $\tilde{S} \subset \{\tilde{x}; \tilde{x}^1 \leq Cr\}$. Note that $\tilde{x}(re_1)$ is in $\partial S$ since $\overline{u}(\tilde{x}(re_1)) = \overline{p}_{y_{*}}(\tilde{x}(re_1))$. Moreover the gradient to the function defining the section, that is
  \[ D_{\tilde{x}}(\overline{u}-\overline{p}_{y_{*}})(\tilde{x}(re_1)),\]
  will be a normal to a supporting hyperplane of $\tilde{S}$. By the chain rule
  \begin{equation}
    \label{eq:2}
    D_{\tilde{x}}(\overline{u}-\overline{p}_{y_{*}})(\tilde{x}(re_1)) = [b_{xy}(re_1,y_{*})]^{-1}D_{x}(u-p_{y_{*}})(re_1).
  \end{equation}

  By the expansion \eqref{eq:expansion} we have that, for suitable continuous $g_{i}$, 
  \[  b_{xy}(re_1,y_{*}) = I + rg_{i}(re_1,y_{*})(y_{*})^{i} = I+O(r),\]
  here $O(r)$ is a matrix with each component $O(r)$ as $r \rightarrow 0$. That is,
  \[ |b_{x^i,y^j}(re_1,y_{*}) - \delta_{ij}| \leq C r,\]
  for $C$ depending only on $b$. 
  The same is true for the inverse; $[b_{xy}(re_1,y_{*})]^{-1} = I + O(r)$. 
  Thus by employing also \eqref{eq:y1/2-def}, we see \eqref{eq:2} becomes
  \[ D_{\tilde{x}}(\overline{u}-\overline{p})(\tilde{x}(re_1)) = \left(\kappa -
      \frac{h}{2r}\right)e_1 + O(r)\left(\kappa - \frac{h}{2r}\right)e_1.\]
  The same argument used after \eqref{eq:angle-argument} implies that $D_{\tilde{x}}(\overline{u}-\overline{p}_{y_{*}})(\tilde{x}(re_1))$, which is the normal to the supporting hyperplane of $\tilde{S}$ at $\tilde{x}(re_1)$, makes
  angle $\overline{\theta}$ with the $e_1$ axis for
  $\sin \overline{\theta} \leq Cr$, $C$ depending only on $b$. Now because the supporting hyperplane passes through $\tilde{x}(re_1) = re_1+O(r^2)$ and makes angle
  $\sin \overline{\theta}$ with the plane $\tilde{x}^1 = 0$ we have 
  \[ \tilde{S} \subset \{\tilde{x} ; \tilde{x}^1 \leq r+O(r^2)+C(\sin \overline{\theta}) \text{diam}(\tilde{S}) \leq
    Cr \}.\] See Figure \ref{fig:geometric-arguments}(b). A similar argument will be used for the estimate
  \[ \tilde{S_0} \subset \{\tilde{x} ; \tilde{x}^1 \geq -Cr\}.\] First note, using \eqref{eq:expansion} and \eqref{eq:y-expansion}, we obtain
  \begin{align}
   \nonumber    p_{y_{*}}(0) &\geq -y_{*} \cdot (re_1) - Cr^2|y_{*}|^2 + h\\
\nonumber & \geq \left(\frac{-h}{2r}e_1\right)\cdot(re_1) - C|r|^2|y_{*}|^2 + h \\
    \label{eq:origin-est}    &\geq \frac{-h}{2} - Ch^2 +h> \frac{h}{4}.
  \end{align}
In the final line we've used $|y_{*}| \leq C\frac{h}{r}$ and that $Ch^2 \leq h/4$ provided $h$ is sufficiently small (which is assured by $r$ sufficiently small and the locally-Lipschitz property of $u$). 
  On the other hand, similar reasoning yields
  \begin{align*}
    p_{y_{*}}(-2re_1)  &= b(-2re_1,y_{*}) - b(re_1,y_{*})+h\\
                         &\leq  (-2re_1)\cdot y_{*} - (re_1)\cdot y_{*} + h + C|r|^2|y_{*}|^2\\
                         &\leq (-2r)\left(\frac{h}{2r}\right) - r \left(\frac{h}{2r}\right) + h + C|r|^2|y_{*}|^2 < -\frac{h}{4}
  \end{align*}
 Thus in the $x$ coordinates $S_0$ has a boundary point $-te_1$ for some $t \in (-2r,0)$ and subsequently in the $\tilde{x}$ coordinates $\tilde{S_0}$ has boundary point $\tilde{x}(-te_1) = -te_1+O(t^2)$. 
    Similar techniques to above yield that at this boundary point, $\tilde{x}(-te_1)$, the outer normal $D_{\tilde{x}}\overline{p}$ makes angle  $\sin\underline{\theta} \leq Cr$ with the negative $e_1$ axis. The same tangent hyperplane argument as above, this time applied to the convex set $S_0$, implies $\tilde{S_0} \subset \{\tilde{x}; \tilde{x}^1 \geq -Cr \}$. This completes
  the proof of \eqref{eq:section-containment}, the containment condition.

  \textit{Step 5. (Proof of \eqref{eq:energy-comparison})} Now we start the proof of \eqref{eq:energy-comparison}. Our argument relies on \eqref{eq:section-containment}. Thus we prove \eqref{eq:energy-comparison} after the further change of variables \eqref{eq:y1/2-transform}. For brevity we retain the $x,b,p_{y_{*}}$ notation (no tilde).  The estimate \eqref{eq:energy-comparison} will hold for the original coordinates and functions, with the constants $C_1,C_2$ modified by dependence on the coordinate transform, that is, on $b$. 

Now, the uniform $b^*$-convexity condition on $c$ noted in Remark \ref{rem:unif-conv} implies
  \begin{align}
    \nonumber \int_{S} \big[c(Yu(x)) - &b(x,Yu(x)) -c(y_{*}) + b(x,y_{*})\big] \ f(x) \ dx   \\
    \nonumber                      &= \int_{S}\big[c\big(Y(x,Du(x))\big) - b\big(x,Y(x,Du(x))\big) \\
    \nonumber    &\quad\quad-c\big(Y(x,Dp_{y_{*}}(x))\big) + b\big(x,Y(x,Dp_{y_{*}}(x))\big)\big]f(x) \ dx \\
    \label{eq:integrand-ineq}    &\geq \int_{S} \big[\epsilon|Dp_{y_{*}}(x) - Du(x)|^2\\
    \nonumber    &\quad\quad+ D_p[c(y_{*}) - b(x,y_{*})]\cdot\big(Du(x) - Dp_{y_{*}}(x)\big) \big]f(x)\ dx ,
  \end{align}
  where $\epsilon$ derives from the uniform $b^*$-convexity of $c$ and derivatives with respect to $p$ are meant in the sense
  \[ D_{p}[c(y_{*}) - b(x,y_{*})] = D_p[c(Y(x,p))-b(x,Y(x,p))]|_{p = b_x(x,y_{*})}.\] 
  We compute estimates for both terms in \eqref{eq:integrand-ineq}. Using the divergence theorem
  \begin{align}
    \nonumber    \int_{S}D_p&[c(y_{*}) - b(x,y_{*})]\cdot\big(Du(x) - Dp_{y_{*}}(x)\big) f(x)\ dx\\
    \nonumber            &= \int_{S}\nabla_x \cdot \left((u-p_{y_{*}})f(x)D_p[c(y_{*}) - b(x,y_{*})]\right)\\
    \nonumber            &\quad\quad- (u - p_{y_{*}})\nabla_x \cdot\left[f(x)D_p\big(c(y_{*}) - b(x,y_{*})\big)\right]  \ dx \\
    \label{eq:int-est1}    & \geq \int_{\partial S}f(x)(u - p_{y_{*}})D_p[c(y_{*}) - b(x,y_{*})] \cdot \hat{n} \ ds - C|h||S|.
  \end{align}
  The final inequality uses \eqref{eq:height-comparison} to control $p_{y_{*}}-u$ and $f \in  C^{0,1}(\X)$. To compute the boundary integral decompose $\partial S$ as
  $\partial S = (\partial S \cap \X) \cup (\partial S \cap \partial \X)$. Then 
  $p_{y_{*}}-u$ is $0$ on $\partial S \cap \X$ and bounded by $h$ on
  $(\partial S \cap \partial \X)$ so \eqref{eq:int-est1} becomes
  \begin{align}
    \nonumber    \int_{S}D_p[&c(y_{*}) - b(x,y_{*})]\cdot\big(Du(x) - p_{y_{*}}(x)\big)(x) \ dx\\
    \label{eq:int-est2}    &\geq -Ch|\partial S \cap \partial \X| - C h|S| \geq -Ch|S|.
  \end{align}
  Here the final inequality is by the estimate $|\partial S \cap \partial \X| \leq C|S|$ relating the area of $\partial S \cap \partial \X$ to the volume of $S$, proved by Carlier and Lachand-Robert \cite{MR1811127} and Chen \cite[pg. 47]{MR4531938}. Chen's proof is as follows
  \[ |S| = \frac{1}{n}\int_{S} \text{div}(x)  \ dx \geq \frac{1}{n}\int_{\partial S \cap \partial X} x \cdot \hat{n} \ dS \geq C| \partial S \cap \partial \X|. \]
We've used that by convex geometry for $x \in \partial S \cap \partial X$, $x \cdot \hat{n}$ is the distance from the origin to the support plane of $S$ with normal $\hat{n}$. Thus the integrand $x \cdot \hat{n}$ is bounded below by $\text{dist}(\X',  \partial \X)$. 

  To estimate the remaining term in \eqref{eq:integrand-ineq}, that is the term
\[ \int_{S} \epsilon|Dp_{y_{*}}(x) - Du(x)|^2 f(x) \ dx, \]
we follow Caffarelli and Lions's argument. For $x = (x^1,x')$, let $P(x) := (0,x')$  be the projection onto $\{x; x^1 = 0\}$.   We denote by $S/K$ the dilation of $S$ by a factor $1/K$ with respect to the origin where $K>1$ will be chosen. 

For each $(0,x') \in P(S/K)$ the set $(P^{-1}(x') \cap S ) \setminus (S/K)$ is two disjoint line segments. We let $l_{x'}$ be the line segment with greater $x_1$ component and write $l_{x'} = [a_{x'},b_{x'}] \times \{x'\}$ where $b_{x'} > a_{x'}$.

Choose $K = \max\{2\text{diam}(\X)/d,1\}$  where $d = \text{dist}(\X',\partial \X)$. Thus, because $B_{d}(0) \subset \X$,  $P(\X/K)+(d/2)e_1 \subset \text{int}(\X)$. At this point we take $r$ smaller depending on $b,d$ to ensure (via the slab containment \eqref{eq:section-containment})  $S \subset \{x ; |x_1| \leq d/2\}$.
Then for $(0,x') \in P(S/K)$ we see  slab containment implies the point $(b_{x'},x')$ satisfies $b_{x'} \leq d/2$ so $(b_{x'},x') \in \partial S \cap \text{int}X$.

We claim the following 
   \begin{align}
  \label{eq:a-cond}   u((a_{x'},x')) - p_{y_{*}}((a_{x'},x)) &\leq -\frac{K-1}{K}h/4,\\
  \label{eq:b-cond}   u((b_{x'},x')) - p_{y_{*}}((b_{x'},x)) &= 0,\\
  \label{eq:d-cond}   d_{x'}:=b_{x'} - a_{x'} &\leq Cr.
   \end{align}
   Here \eqref{eq:a-cond} is by convexity of $u-p_{y_{*}}$ along a line segment joining the origin, where $u-p_{y_{*}} \leq -h/4$, to $(Ka_{x'},Kx') \in \partial S$, where $u-p_{y_{*}}\leq0$. Then \eqref{eq:b-cond} is by $u = p_{y_{*}}$ on $\partial S \cap \text{int}(\X)$ and \eqref{eq:d-cond} is by the containment condition \eqref{eq:section-containment}.

     Thus, by an application of Jensen's inequality we have
  \begin{align}
    \nonumber    \int_{a_{x'}}^{b_{x'}}&[D_{x^1}p_{y_{*}}((t,x')) - D_{x^1}u((t,x'))]^2 dt\\
    \nonumber    &\geq \frac{1}{d_{x'}}\left( \int_{a_{x'}}^{b_{x'}} D_{x^1}p_{y_{*}}((t,x')) - D_{x^1}u((t,x')) dt\right)^2 \\
                                        &\geq \frac{1}{d_{x'}}\left(\frac{K-1}{K}\frac{h}{4}\right)^2 \geq Ch^2/r.\label{eq:4}
  \end{align}
  To conclude we integrate along all lines $l_{x'}$  for  $x' \in P(S/K)$. Indeed
  \begin{align}
    \nonumber   \int_{S} |Dp_{y_{*}} - Du|^2 \ dx &\geq \int_{P(S/K)}\int_{a_{x'}}^{b_{x'}} |D_{x^1}p_{y_{*}} - D_{x^1}u|^2 \ d t  \ d x'\\
    \nonumber   &\geq \int_{P(S/K)} Ch^2/r \ d x'\\
    \nonumber   &= C\frac{h^2}{r^2} (r |P(S/K)|)\\
    \label{eq:square-est}   &\geq C \frac{h^2}{r^2}|S|.
  \end{align}
  Here the final inequality uses the convexity of $S$ and the containment condition \eqref{eq:section-containment}. 
   Substituting 
  \eqref{eq:int-est2} and \eqref{eq:square-est} into \eqref{eq:integrand-ineq} (recall $f \geq \lambda$) yields the energy estimate  \eqref{eq:energy-comparison}, thereby completing the proof of Lemma \ref{lem:main}.
\end{proof}

\appendix

\section{Proof of Lemma \ref{lem:transformations}}
\label{sec:proof-lemma-reflem}
Here we prove Lemma \ref{lem:transformations}. Whilst results like these appear in a number of works, the techniques we employ in the proof are taken from \cite{MR2654086,MR3067826,trudiner-wang-preprint,MR3419751,MR3121636}.

For parts (i) and (ii) we follow \cite{MR3121636,trudiner-wang-preprint}. Set
\[ h(\tilde{x}) := u(x) - b(x,y_0),\]
for $x$ such that $\tilde{x}(x) = x$. We compute an expression for the Hessian of $h$.  By direct calculation
\[D_{\tilde{x}^i\tilde{x}^j} h = [u_{x^\alpha x^\beta}(x)-b_{x^\alpha x^\beta}(x,y_0)]\frac{\partial x^\alpha}{\partial \tilde{x}^i}\frac{\partial x^\beta}{\partial \tilde{x}^j} + [u_\alpha(x) - b_{\alpha}(x,y_0)]\frac{\partial^2 x^\alpha}{\partial \tilde{x}^i\partial \tilde{x}^j}.\]
The $b$-convexity of $u$ implies
\begin{equation}
  \label{eq:b-convexity-ineq}
  u_{x^\alpha x^\beta}(x) \geq b_{x^\alpha x^\beta}(x,Y(x,Du)) + \epsilon \delta_{ab},
\end{equation}
where $\epsilon>0$ if $u$ is uniformly $b$-convex and $\epsilon=0$ if $u$ is merely $b$-convex. Thus
\begin{align}
\label{eq:second-derivs}  D_{\tilde{x}^i\tilde{x}^j} h &\geq [b_{x^\alpha x^\beta}(x,Y(x,p_1))-b_{x^\alpha x^\beta}(x,Y(x,p_0))]\frac{\partial x^\alpha}{\partial \tilde{x}^i}\frac{\partial x^\beta}{\partial \tilde{x}^j} \\
  &\quad+ [p_1 - p_0]_k\frac{\partial^2 x^k}{\partial \tilde{x}^i\partial \tilde{x}^j}+\epsilon\sum_{\alpha}\frac{\partial x^\alpha}{\partial \tilde{x}^i}\frac{\partial x^\alpha}{\partial \tilde{x}^j},\nonumber
\end{align}
where we've set $p_1 = Du(x), p_0 = b_{x}(x,y_0)$. A direct calculation implies 
\[ \frac{\partial^2 x^k}{\partial \tilde{x}^i\partial \tilde{x}^j} = -D_{p_k}b_{x^\alpha x^\beta}(x,Y(x,p_0)) \frac{\partial x^\alpha}{\partial \tilde{x}^i}\frac{\partial x^\beta}{\partial \tilde{x}^j}.\]
Thus \eqref{eq:second-derivs} may be rewritten as
\begin{align*}
  D_{\tilde{x}^i\tilde{x}^j}h   &\geq [b_{x^\alpha x^\beta}(x,Y(x,p_1))-b_{x^\alpha x^\beta}(x,Y(x,p_0)) \\
  &\quad- (p_1-p_0)_kD_{p_k}b_{x^\alpha x^\beta}(x,Y(x,p_0))] \frac{\partial x^\alpha}{\partial \tilde{x}^i}\frac{\partial x^\beta}{\partial \tilde{x}^j} +  \epsilon\sum_{\alpha}\frac{\partial x^\alpha}{\partial \tilde{x}^i}\frac{\partial x^\alpha}{\partial \tilde{x}^j}.
\end{align*}
Using a Taylor series for the function $p \mapsto b_{x^\alpha x^\beta}(x,Y(x,p))$ we obtain
\[  D_{\tilde{x}^i\tilde{x}^j}h \geq D_{p_kp_l}b_{x^\alpha x^\beta}(x,Y(x,p_\tau))\frac{\partial x^\alpha}{\partial \tilde{x}^i}\frac{\partial x^\beta}{\partial \tilde{x}^j}(p_1-p_0)_k(p_1-p_0)_l+\epsilon\sum_{\alpha}\frac{\partial x^\alpha}{\partial \tilde{x}^i}\frac{\partial x^\alpha}{\partial \tilde{x}^j}.\]
Now we test the convexity in an arbitrary direction. Fix $\tilde{x}_0,\tilde{x}_1 \in \tilde{\X}$ and set $\tilde{h}(t) =h(t\tilde{x}_1+(1-t)\tilde{x}_0)$. The above expression and B3 implies
\begin{align}
  \label{eq:conv-est}
  \tilde{h}''(t) \geq \epsilon\sum_{\alpha}\frac{\partial x^\alpha}{\partial \tilde{x}^i}\frac{\partial x^\alpha}{\partial \tilde{x}^j} (\tilde{x}_1-\tilde{x}_0)_i(\tilde{x}_1-\tilde{x}_0)_j.
\end{align}
To estimate this from below we compute $\frac{\partial x}{\partial \tilde{x}} = [b_{x,y}(x,y_0)]^{-1}$ and by condition B1,  $|\det [b_{x,y}(x,y_0)]^{-1}| > c_0$ for some positive $c_0$. Thus there is $c_1$ such that for all unit vectors $\xi$ we have $|[b_{x,y}(x,y_0)]^{-1}\xi| \geq c_1$ (indeed, $[b_{x,y}(x,y_0)]^{-1}\xi \neq 0$ and compactness implies the lower bound). Note $\frac{\partial x^\alpha}{\partial \tilde{x}^i} (\tilde{x}_1-\tilde{x}_0)_i$ is the $\alpha^{th}$ component of the vector $[b_{x,y}(x,y_0)]^{-1}(\tilde{x}_1-\tilde{x}_0)$. Subsequently \eqref{eq:conv-est} becomes
\[ \tilde{h}''(t) \geq c_1^2 \epsilon |\tilde{x}_1-\tilde{x}_0|^2.\]
This implies the corresponding uniform convexity of $\tilde{u}$ when $\epsilon > 0$ and convexity when $\epsilon = 0$.

Point (ii) of Lemma \ref{lem:transformations} follows immediately from the convexity of $u$ in the given coordinates. 

For result (iii) in Lemma \ref{lem:transformations} we assume $b_{ij}(x_0,y_0) = \delta_{ij}$ which can always be satisfied after an initial affine transformation and $(x_0,y_0)=(0,0)$. Then by a second-order Taylor series for $\tilde{b}$ we have
\begin{equation}
  \label{eq:expansion-1}
  \tilde{b}(\tilde{x},\tilde{y}) = \tilde{b}(0,\tilde{y})+\tilde{b}_{\tilde{x}^i}(0,\tilde{y}) \tilde{x}^i+\frac{1}{2}\tilde{b}_{\tilde{x}^i\tilde{x}^j}(\tilde{x}^t,\tilde{y})\tilde{x}^i\tilde{x}^j.
\end{equation}
From \eqref{eq:tilde-b-def}, $\tilde{b}(0,y) = 0$ is clear. In addition
\begin{equation}
  \label{eq:expansion-2}
  \tilde{b}_{\tilde{x}^i}(0,\tilde{y}) = [b_{x^k}(x_0,y) - b_{x^k}(x_0,y_0)]\left(\frac{\partial x^k}{\partial \tilde{x}^i}\big\vert_{\tilde{x}=0}\right) = \tilde{y}^i.
\end{equation}

Another Taylor series, implies
\begin{align}
\nonumber  \tilde{b}_{\tilde{x}^i\tilde{x}^j}(\tilde{x}_t,\tilde{y}) &=   \tilde{b}_{\tilde{x}^i\tilde{x}^j}(\tilde{x}_t,0) +   \tilde{b}_{\tilde{x}^i\tilde{x}^j\tilde{y}^k}(\tilde{x}_t,0)\tilde{y_k} + \frac{1}{2}\tilde{b}_{\tilde{x}^i\tilde{x}^j\tilde{y}^k\tilde{y}^l}\tilde{y}^k\tilde{y}^l\\
 \label{eq:expansion-3} &= \frac{1}{2}\tilde{b}_{\tilde{x}^i\tilde{x}^j\tilde{y}^k\tilde{y}^l}\tilde{y}^k\tilde{y}^l.
\end{align}
Here we've used that $\tilde{b}({\tilde{x},0})= 0$ and, similarly to \eqref{eq:expansion-2}, $\tilde{b}_{\tilde{y}^k}(\tilde{x},0) = \tilde{x}^k$, whereby the third derivative term vanishes. Equations \eqref{eq:expansion-2} and \eqref{eq:expansion-3} into \eqref{eq:expansion-1} yields Lemma \ref{lem:transformations} (iii).

\bibliographystyle{plain}
\bibliography{bibliography} 
\end{document}